\newtheorem{Theorem}[equation]{Theorem}
\newtheorem{Corollary}[equation]{Corollary}
\newtheorem{Lemma}[equation]{Lemma}
\newtheorem{Proposition}[equation]{Proposition}
\theoremstyle{definition}
\newtheorem{Definition}[equation]{Definition}
\theoremstyle{remark}
\newtheorem{Remark}[equation]{Remark}
\numberwithin{equation}{section}
\DeclareMathOperator{\id}{id}
\DeclareMathOperator{\ad}{ad}
\newcommand{\ve}{\varepsilon}
\begin{document}
\title{A homomorphism from the affine Yangian $Y_{\hbar,\ve}(\widehat{\mathfrak{sl}}(n))$ to the affine Yangian $Y_{\hbar,\ve}(\widehat{\mathfrak{sl}}(n+1))$}
\author{Mamoru Ueda\thanks{mueda@ualberta.ca}}
\affil{Department of Mathematical and Statistical Sciences, University of Alberta, 11324 89 Ave NW, Edmonton, AB T6G 2J5, Canada}
\date{}
\maketitle
\begin{abstract}
We construct a homomorphism from the affine Yangian $Y_{\hbar,\ve}(\widehat{\mathfrak{sl}}(n))$ to the standard degreewise completion of the affine Yangian $Y_{\hbar,\ve}(\widehat{\mathfrak{sl}}(n+1))$. We also give the relationship between this homomorphism and the one from the affine Yangian $Y_{\hbar,\ve}(\widehat{\mathfrak{sl}}(n))$ to the universal enveloping algebra of the rectangular algebra $\mathcal{W}^k(\mathfrak{gl}(2n),(2^n))$ constructed by the author \cite{U4}.
\end{abstract}
\section{Introduction}
Drinfeld (\cite{D1}, \cite{D2}) introduced the Yangian associated with a finite dimensional simple Lie algebra $\mathfrak{g}$ in order to solve the Yang-Baxter equation. The Yangian is a quantum group which is a deformation of the current algebra $\mathfrak{g}\otimes\mathbb{C}[z]$. The Yangian of type $A$ has several presentations: the RTT presentation, the current presentation, and so on. 
By using the current presentation of the Yangian, we can extend the definition of the Yangian $Y_\hbar(\mathfrak{g})$ to a symmetrizable Kac-Moody Lie algebra $\mathfrak{g}$. For general symmetrizable Kac-Moody Lie algebra $\mathfrak{g}$, it has not been resolved whether the Yangian $Y_\hbar(\mathfrak{g})$ has a quantum group structure or not. However, in the case that $\mathfrak{g}$ is of affine type, this problem has been affirmatively resolved (\cite{BL}, \cite{GNW}, and \cite{U1}). 

Recently, relationships between affine Yangians and $W$-algebras have been studied. 
A $W$-algebra $\mathcal{W}^k(\mathfrak{g},f)$ is a vertex algebra associated with a finite dimensional reductive Lie algebra $\mathfrak{g}$ and a nilpotent element $f\in\mathfrak{g}$. It appeared in the study of two dimensional conformal field theories (\cite{Z}). 
We call a $W$-algebra associated with $\mathfrak{gl}(n)$ (resp. $\mathfrak{gl}(ln)$) and a principal nilpotent element (resp. a nilpotent element of type of $(l^n)$) a principal (resp. rectangular) $W$-algebra. The AGT (Alday-Gaiotto-Tachikawa) conjecture suggests that there exists a representation of the principal $W$-algebra of type $A$ on the equivariant homology space of the moduli space of $U(r)$-instantons. Schiffmann and Vasserot \cite{SV} gave this representation by using an action of the affine Yangian associated with $\widehat{\mathfrak{gl}}(1)$ on this homology space.

In the rectangular case, the author \cite{U4} constructed a surjective homomorphism from the Guay's affine Yangian (\cite{Gu2} and \cite{Gu1}) to the universal enveloping algebra of a rectangular $W$-algebra of type $A$:
\begin{equation*}
\Phi^n\colon Y_{\hbar,\ve}(\widehat{\mathfrak{sl}}(n))\to\mathcal{U}(\mathcal{W}^k(\mathfrak{gl}(ln),(l^n))).
\end{equation*}
The Guay's affine Yangian $Y_{\hbar,\ve}(\widehat{\mathfrak{sl}}(n))$ is a 2-parameter affine Yangian associated with $\widehat{\mathfrak{sl}}(n)$. The Guay's affine Yangian has a quantum group structure and is the deformation of the universal enveloping algebra of the central extension of $\mathfrak{sl}(n)[u^{\pm1},v]$. It is known that the Guay's affine Yangian has a representation on the equivariant homology space of affine Laumon spaces (\cite{FFNR} and \cite{FT}). Similarly to principal $W$-algebras, we expect that we can construct geometric representations of rectangular $W$-algebras by using the homomorphism $\Phi^n$. 

In non-rectangular cases, it is conjectured in Creutzig-Diaconescu-Ma \cite{CE} that a geometric representation of an iterated $W$-algebra of type $A$ on the equivariant homology space of the affine Laumon space will be given through an action of an affine shifted Yangian constructed in \cite{FT}.
Based on this conjecture, we can expect that there exists a non-trivial homomorphism from the affine shifted Yangian to an iterated $W$-algebra associated with any nilpotent element. However, tackling this issue is very difficult and has not been resolved.

In \cite{U7}, we constructed a homomorphism from the Guay's affine Yangian $Y_{\hbar,\ve}(\widehat{\mathfrak{sl}}(n))$ to the universal enveloping algebra of a non-rectangular $W$-algebra.
This is the affine version of De Sole-Kac-Valeri \cite{DKV}. In \cite{DKV}, De Sole, Kac and Valeri constructed a homomorphism from the finite Yangian of type $A$ to a finite $W$-algebras of type $A$ by using the Lax operator. The homomorphism in De Sole-Kac-Valeri \cite{DKV} is a restriction of the one from the shifted Yangian to a finite $W$-algebra given by Brundan-Kleshchev in \cite{BK}. We expect that we can extend the homomorphism given in \cite{U7} to the affine shifted Yangian and this extended homomorphism coincides with the homomorphism conjectured in Creutzig-Diaconescu-Ma \cite{CE}.

For this purpose, it is helpful to give a homomorphism from the affine Yangian to the affine shifted Yangian. As a first step, in this article, we construct a homomorphism 
\begin{equation*}
\Psi\colon Y_{\hbar,\ve}(\widehat{\mathfrak{sl}}(n))\to \widetilde{Y}_{\hbar,\ve}(\widehat{\mathfrak{sl}}(n+1)),
\end{equation*}
where $\widetilde{Y}_{\hbar,\ve}(\widehat{\mathfrak{sl}}(n+1))$ is the standard degreewise completion of $Y_{\hbar,\ve}(\widehat{\mathfrak{sl}}(n+1))$.
In finite setting, by using the RTT presentation, there exists a natural embedding from the Yangian associated with $\mathfrak{gl}(n)$ to the one associated with $\mathfrak{gl}(n+1)$. However, the Guay's affine Yangian $Y_{\hbar,\ve}(\widehat{\mathfrak{sl}}(n))$ does not have the RTT presentation.
For the construction of $\Psi$, we use the finite presentation called the {\it minimalistic presentation} given by Guay-Nakajima-Wendlandt \cite{GNW}.

As for one of the rectangular cases, we can construct a relationship with $\Psi$ and $\Phi^n$. There exists an embedding
\begin{equation*}
\iota\colon  \mathcal{W}^{k+1}(\mathfrak{gl}(2n),(2^n))\to(\mathcal{W}^k(\mathfrak{gl}(2n+2),(2^{n+1})).
\end{equation*}
 In the last section of this article, we show the following relation:
\begin{equation*}
\Phi^{n+1}\circ\Psi=\iota\circ\Phi^n.
\end{equation*}
We expect that the similar relation holds in the non-rectangular case.
\section{Affine Yangian}
Let us recall the definition of the affine Yangian of type $A$ (Definition~3.2 in \cite{Gu2} and Definition~2.3 of \cite{Gu1}). Here after, we identify $\{0,1,2,\cdots,n-1\}$ with $\mathbb{Z}/n\mathbb{Z}$.
\begin{Definition}\label{Def}
Suppose that $n\geq 3$. The affine Yangian $Y_{\ve_1,\ve_2}(\widehat{\mathfrak{sl}}(n))$ is the associative algebra over  generated by $x_{i,r}^{+}, x_{i,r}^{-}, h_{i,r}$ $(i \in \{0,1,\cdots, n-1\}, r \in \mathbb{Z}_{\geq 0})$ with two parameters $\ve_1, \ve_2 \in \mathbb{C}$ subject to the following defining relations:
\begin{gather}
	[h_{i,r}, h_{j,s}] = 0, \label{eq1.1}\\
	[x_{i,r}^{+}, x_{j,s}^{-}] = \delta_{i,j} h_{i, r+s}, \label{eq1.2}\\
	[h_{i,0}, x_{j,r}^{\pm}] = \pm a_{i,j} x_{j,r}^{\pm},\label{eq1.3}\\
	[h_{i, r+1}, x_{j, s}^{\pm}] - [h_{i, r}, x_{j, s+1}^{\pm}] 
	= \pm a_{i,j} \dfrac{\varepsilon_1 + \varepsilon_2}{2} \{h_{i, r}, x_{j, s}^{\pm}\} 
	- m_{i,j} \dfrac{\varepsilon_1 - \varepsilon_2}{2} [h_{i, r}, x_{j, s}^{\pm}],\label{eq1.4}\\
	[x_{i, r+1}^{\pm}, x_{j, s}^{\pm}] - [x_{i, r}^{\pm}, x_{j, s+1}^{\pm}] 
	= \pm a_{ij}\dfrac{\varepsilon_1 + \varepsilon_2}{2} \{x_{i, r}^{\pm}, x_{j, s}^{\pm}\} 
	- m_{ij} \dfrac{\varepsilon_1 - \varepsilon_2}{2} [x_{i, r}^{\pm}, x_{j, s}^{\pm}],\label{eq1.5}\\
	\sum_{w \in \mathfrak{S}_{1 + |a_{ij}|}}[x_{i,r_{w(1)}}^{\pm}, [x_{i,r_{w(2)}}^{\pm}, \dots, [x_{i,r_{w(1 + |a_{ij}|)}}^{\pm}, x_{j,s}^{\pm}]\dots]] = 0\  \text{ if }i \neq j,\label{eq1.6}
\end{gather}
where $\{X,Y\}=XY+YX$ and
\begin{equation*}
a_{i,j} =\begin{cases}
2&\text{if } i=j, \\
-1&\text{if }j=i\pm 1,\\
0&\text{otherwise,}
	\end{cases}
	 m_{i,j}=
	\begin{cases}
-1 &\text{if } i=j - 1,\\
1 &\text{if } i=j + 1,\\
0&\text{otherwise}.
	\end{cases}
\end{equation*}
\end{Definition}
Guay-Nakajima-Wendland \cite{GNW} gave the new presentation of the affine Yangian $Y_{\ve_1,\ve_2}(\widehat{\mathfrak{sl}}(n))$ whose generators are
\begin{equation*}
\{h_{i,r},x^\pm_{i,r}\mid 0\leq i\leq n-1,r\in\mathbb{Z}\}.
\end{equation*}
\begin{Proposition}\label{Prop32}
Suppose that $n\geq3$. The affine Yangian $Y_{\ve_1,\ve_2}(\widehat{\mathfrak{sl}}(n))$ is isomorphic to the associative algebra $Y_{\hbar,\ve}(\widehat{\mathfrak{sl}}(n))$ generated by $X_{i,r}^{+}, X_{i,r}^{-}, H_{i,r}$ $(i \in \{0,1,\cdots, n-1\}, r = 0,1)$ subject to the following defining relations:
\begin{gather}
[H_{i,r}, H_{j,s}] = 0,\label{Eq2.1}\\
[X_{i,0}^{+}, X_{j,0}^{-}] = \delta_{ij} H_{i, 0},\label{Eq2.2}\\
[X_{i,1}^{+}, X_{j,0}^{-}] = \delta_{ij} H_{i, 1} = [X_{i,0}^{+}, X_{j,1}^{-}],\label{Eq2.3}\\
[H_{i,0}, X_{j,r}^{\pm}] = \pm a_{ij} X_{j,r}^{\pm},\label{Eq2.4}\\
[\tilde{H}_{i,1}, X_{j,0}^{\pm}] = \pm a_{ij}\left(X_{j,1}^{\pm}\right),\text{ if }(i,j)\neq(0,n-1),(n-1,0),\label{Eq2.5}\\
[\tilde{H}_{0,1}, X_{n-1,0}^{\pm}] = \mp \left(X_{n-1,1}^{\pm}+(\ve+\dfrac{n}{2}\hbar) X_{n-1, 0}^{\pm}\right),\label{Eq2.6}\\
[\tilde{H}_{n-1,1}, X_{0,0}^{\pm}] = \mp \left(X_{0,1}^{\pm}-(\ve+\dfrac{n}{2}\hbar) X_{0, 0}^{\pm}\right),\label{Eq2.7}\\
[X_{i, 1}^{\pm}, X_{j, 0}^{\pm}] - [X_{i, 0}^{\pm}, X_{j, 1}^{\pm}] = \pm a_{ij}\dfrac{\hbar}{2} \{X_{i, 0}^{\pm}, X_{j, 0}^{\pm}\}\text{ if }(i,j)\neq(0,n-1),(n-1,0),\label{Eq2.8}\\
[X_{0, 1}^{\pm}, X_{n-1, 0}^{\pm}] - [X_{0, 0}^{\pm}, X_{n-1, 1}^{\pm}]= \mp\dfrac{\hbar}{2} \{X_{0, 0}^{\pm}, X_{n-1, 0}^{\pm}\} + (\ve+\dfrac{n}{2}\hbar) [X_{0, 0}^{\pm}, X_{n-1, 0}^{\pm}],\label{Eq2.9}\\
(\ad X_{i,0}^{\pm})^{1+|a_{i,j}|} (X_{j,0}^{\pm})= 0 \ \text{ if }i \neq j, \label{Eq2.10}
\end{gather}
where $\tilde{H}_{i,1}=H_{i,1}-\dfrac{\hbar}{2}H_{i,0}^2$, $\hbar=\ve_1+\ve_2$ and $\ve=-n\ve_1$.
\end{Proposition}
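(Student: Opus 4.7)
The plan is to construct mutually inverse homomorphisms between the two presentations. For the forward map $\Phi\colon Y_{\hbar,\ve}(\widehat{\mathfrak{sl}}(n))\to Y_{\ve_1,\ve_2}(\widehat{\mathfrak{sl}}(n))$ I would send $X_{i,r}^{\pm}\mapsto x_{i,r}^{\pm}$ and $H_{i,r}\mapsto h_{i,r}$ for $r\in\{0,1\}$; relations (\ref{Eq2.1})--(\ref{Eq2.4}) and (\ref{Eq2.10}) then follow as direct specializations of (\ref{eq1.1})--(\ref{eq1.3}) and (\ref{eq1.6}). For (\ref{Eq2.5})--(\ref{Eq2.9}) I would put $r=s=0$ in (\ref{eq1.4})--(\ref{eq1.5}), expand $\tilde{H}_{i,1}=H_{i,1}-\frac{\hbar}{2}H_{i,0}^{2}$, use $[H_{i,0}^{2},X_{j,0}^{\pm}]=\pm a_{ij}\{H_{i,0},X_{j,0}^{\pm}\}$ to cancel the anti-commutator term, and translate $m_{ij}\frac{\ve_1-\ve_2}{2}$ into a multiple of $\ve+\frac{n}{2}\hbar$ by using $\hbar=\ve_1+\ve_2$ and $\ve=-n\ve_1$. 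The shifts in (\ref{Eq2.6})--(\ref{Eq2.7}) and (\ref{Eq2.9}) then appear precisely at the cyclic boundary pairs, which are the only ones with $m_{ij}\neq 0$.

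For the inverse $\Psi\colon Y_{\ve_1,\ve_2}(\widehat{\mathfrak{sl}}(n))\to Y_{\hbar,\ve}(\widehat{\mathfrak{sl}}(n))$ I would set $\Psi(x_{i,r}^{\pm})=X_{i,r}^{\pm}$ and $\Psi(h_{i,r})=H_{i,r}$ for $r\in\{0,1\}$ and extend inductively to $r\geq 2$. Since $n\geq 3$, for each $j$ there exists an index $i=i(j)$ with $a_{ij}=-1$ and $(i,j)\notin\{(0,n-1),(n-1,0)\}$; I would then set
\begin{equation*}
\Psi(x_{j,r+1}^{\pm}):=\mp[\tilde{H}_{i(j),1},\Psi(x_{j,r}^{\pm})],\qquad \Psi(h_{j,r}):=[\Psi(x_{j,0}^{+}),\Psi(x_{j,r}^{-})],
\end{equation*}
using (\ref{Eq2.5}) and (\ref{eq1.2}) respectively as the defining rules.

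The main obstacle is to verify that these inductive definitions satisfy the entire family of current-presentation relations (\ref{eq1.1})--(\ref{eq1.6}). I would argue by a simultaneous induction on $r+s$: relations (\ref{eq1.4})--(\ref{eq1.5}) at step $r+s+1$ are obtained from step $r+s$ by bracketing with $\tilde{H}_{i,1}$ and applying the Jacobi identity, with the base cases (\ref{Eq2.5})--(\ref{Eq2.9}) producing the correct $m_{ij}\frac{\ve_1-\ve_2}{2}$-shift at the boundary pairs. The independence of $\Psi(x_{j,r+1}^{\pm})$ from the auxiliary choice $i(j)$ follows a posteriori from (\ref{eq1.4}) at the same step. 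The general Serre relation (\ref{eq1.6}) at arbitrary multi-indices is deduced from the cubic base case (\ref{Eq2.10}) by iterated bracketing with $\tilde{H}_{i,1}$, and the most delicate point is the combinatorial bookkeeping of the symmetrization over $\mathfrak{S}_{1+|a_{ij}|}$, which must produce exactly the symmetrized sum in (\ref{eq1.6}). Since $\Phi$ and $\Psi$ coincide with the identity on the distinguished generators, they are mutually inverse, which gives the required isomorphism.
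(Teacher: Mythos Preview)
Your proposal contains a concrete error that breaks the argument. You claim that ``the shifts in (\ref{Eq2.6})--(\ref{Eq2.7}) and (\ref{Eq2.9}) then appear precisely at the cyclic boundary pairs, which are the only ones with $m_{ij}\neq 0$.'' This is false: by Definition~\ref{Def}, $m_{i,j}=\mp 1$ whenever $i=j\mp 1$ in $\mathbb{Z}/n\mathbb{Z}$, so $m_{i,j}\neq 0$ for \emph{every} adjacent pair, not just $(0,n-1)$ and $(n-1,0)$. Consequently, setting $r=s=0$ in (\ref{eq1.4}) and subtracting $\frac{\hbar}{2}[h_{i,0}^2,x_{j,0}^{\pm}]$ gives
\[
[\tilde h_{i,1},x_{j,0}^{\pm}]=\pm a_{ij}\,x_{j,1}^{\pm}\ \mp\ m_{ij}\,\tfrac{\ve_1-\ve_2}{2}\,a_{ij}\,x_{j,0}^{\pm},
\]
and the second term does not vanish for ordinary adjacent pairs such as $(i,j)=(1,2)$. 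Hence your identity assignment $H_{i,1}\mapsto h_{i,1}$, $X_{i,1}^{\pm}\mapsto x_{i,1}^{\pm}$ does \emph{not} send relation (\ref{Eq2.5}) to a relation of $Y_{\ve_1,\ve_2}(\widehat{\mathfrak{sl}}(n))$, so your $\Phi$ is not a homomorphism and the mutual-inverse argument collapses.

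The paper handles this differently. It does not redo the Guay--Nakajima--Wendlandt argument; instead it cites \cite{GNW} for the minimalistic presentation (which retains all the $m_{ij}$-terms) and then composes with the explicit shift
\[
\Xi(h_{i,1})=H_{i,1}-\tfrac{i}{2}(\ve_1-\ve_2)H_{i,0}\quad(i\neq 0),\qquad \Xi(h_{0,1})=H_{0,1},
\]
together with $\Xi(h_{i,0})=H_{i,0}$, $\Xi(x_{i,0}^{\pm})=X_{i,0}^{\pm}$. This $i$-dependent shift is exactly what absorbs the $m_{ij}\tfrac{\ve_1-\ve_2}{2}$ contributions at the non-boundary adjacent pairs, leaving a residual shift only at $(0,n-1)$ and $(n-1,0)$, where the cyclic wrap-around produces the factor $\ve+\tfrac{n}{2}\hbar=-\tfrac{n}{2}(\ve_1-\ve_2)$ appearing in (\ref{Eq2.6}), (\ref{Eq2.7}), (\ref{Eq2.9}). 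If you want to salvage your approach, you must replace the identity by this shift (and the induced shift $X_{i,1}^{\pm}\mapsto x_{i,1}^{\pm}+\tfrac{i}{2}(\ve_1-\ve_2)x_{i,0}^{\pm}$ for $i\neq 0$) before attempting the inductive verification.
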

Proposition~\ref{Prop32} is a little different from the presentation given by Guay-Nakajima-Wendland \cite{GNW}.
The isomorphism $\Xi\colon Y_{\ve_1,\ve_2}(\widehat{\mathfrak{sl}}(n))\to Y_{\hbar,\ve}(\widehat{\mathfrak{sl}}(n))$ is given by
\begin{gather*}
\Xi(h_{i,0})=H_{i,0},\quad\Xi(x^\pm_{i,0})=X^\pm_{i,0},\\
\Xi(h_{i,1})=\begin{cases}
H_{0,1}&\text{ if $i=0$},\\
H_{i,1}-\dfrac{i}{2}(\ve_1-\ve_2)H_{i,0}&\text{ if $i\neq0$}.
\end{cases}
\end{gather*}
Thus, Proposition~\ref{Prop32} is derived from Guay-Nakajima-Wendland \cite{GNW}. By this corresponding, we find that
\begin{gather}
[X^\pm_{i,r},X^\pm_{j,s}]=0\text{ if }|i-j|>1,\label{gather1}\\
[X^\pm_{i,1},[X^\pm_{i,0},X^\pm_{j+1,r}]]+[X^\pm_{i,0},[X^\pm_{i,1},X^\pm_{j+1,r}]]=0.\label{gather2}
\end{gather}
\begin{Remark}
In \cite{U5}, the author gave the similar presentation for the affine super Yangian. We note that (2.29) in \cite{U5} contains a typo. We should replace (2.29) with
\begin{equation*}
[X_{0, 1}^{\pm}, X_{n-1, 0}^{\pm}] - [X_{0, 0}^{\pm}, X_{n-1, 1}^{\pm}]= \mp(-1)^{p(m+n)}\dfrac{\hbar}{2} \{X_{0, 0}^{\pm}, X_{n-1, 0}^{\pm}\} - (\ve+\dfrac{n}{2}\hbar) [X_{0, 0}^{\pm}, X_{n-1, 0}^{\pm}].
\end{equation*}
We also note that $\ve=-n\ve_2$ in \cite{U5}. This makes the difference between \eqref{Eq2.6}, \eqref{Eq2.7} and \eqref{Eq2.9} in Proposition~\ref{Prop32} and (2.26), (2.27) and (2.29) in \cite{U5}.
\end{Remark}
By the definition of the affine Yangian $Y_{\hbar,\ve}(\widehat{\mathfrak{sl}}(n))$, we find that there exists a natural homomorphism from the universal enveloping algebra of $\widehat{\mathfrak{sl}}(n)$ to $Y_{\hbar,\ve}(\widehat{\mathfrak{sl}}(n))$. In order to simplify the notation, we denote the image of $x\in U(\widehat{\mathfrak{sl}}(n))$ by $x$.

We take one completion of $Y_{\hbar,\ve}(\widehat{\mathfrak{sl}}(n))$. We set the degree of $Y_{\hbar,\ve}(\widehat{\mathfrak{sl}}(n))$ by
\begin{equation*}
\text{deg}(H_{i,r})=0,\ \text{deg}(X^\pm_{i,r})=\begin{cases}
\pm1&\text{ if }i=0,\\
0&\text{ if }i\neq0.
\end{cases}
\end{equation*}
We denote the standard degreewise completion of $Y_{\hbar,\ve}(\widehat{\mathfrak{sl}}(n))$ by $\widetilde{Y}_{\hbar,\ve}(\widehat{\mathfrak{sl}}(n))$. Let us set $A_i\in\widetilde{Y}_{\hbar,\ve}(\widehat{\mathfrak{sl}}(n))$ as
\begin{align*}
A_i&=\dfrac{\hbar}{2}\sum_{\substack{s\geq0\\u>v}}\limits E_{u,v}t^{-s}[E_{i,i},E_{v,u}t^s]+\dfrac{\hbar}{2}\sum_{\substack{s\geq0\\u<v}}\limits E_{u,v}t^{-s-1}[E_{i,i},E_{v,u}t^{s+1}]\\
&=\dfrac{\hbar}{2}\sum_{\substack{s\geq0\\u>i}}\limits E_{u,i}t^{-s}E_{i,u}t^s-\dfrac{\hbar}{2}\sum_{\substack{s\geq0\\i>v}}\limits E_{i,v}t^{-s}E_{v,i}t^s\\
&\quad+\dfrac{\hbar}{2}\sum_{\substack{s\geq0\\u>i}}\limits E_{u,i}t^{-s-1}E_{i,u}t^{s+1}-\dfrac{\hbar}{2}\sum_{\substack{s\geq0\\i>v}}\limits E_{i,v}t^{-s-1}E_{v,i}t^{s+1},
\end{align*}
where $E_{i,j}$ is a matrix unit whose $(a,b)$ component is $\delta_{a,i}\delta_{b,j}$
Similarly to Section~3 in \cite{GNW}, we define
\begin{align*}
J(h_i)&=\widetilde{H}_{i,1}-A_i+A_{i+1}\in \widetilde{Y}_{\hbar,\ve}(\widehat{\mathfrak{sl}}(n))
\end{align*}
We also set $J(x^\pm_i)=\pm\dfrac{1}{2}[J(h_i),x^\pm_i]$.
Guay-Nakajima-Wendland \cite{GNW} defined the automorphism of $Y_{\hbar,\ve}(\widehat{\mathfrak{sl}}(n))$ by
\begin{equation*}
\tau_i=\exp(\ad(x^+_{i,0}))\exp(-\ad(x^-_{i,0}))\exp(\ad(x^+_{i,0})).
\end{equation*}
Let $\alpha$ be a positive real root. By definition of the Weyl algebra, there is an element $w$ of the Weyl group of $\widehat{\mathfrak{sl}}(n)$ and a simple root $\alpha_j$ such that $\alpha=w\alpha_j$. Then we define a corresponding root vector by
\begin{equation*}
x^\pm_\alpha=\tau_{i_1}\tau_{i_2}\cdots\tau_{i_{p-1}}(x^\pm_{j}),
\end{equation*}
where $w =s_{i_1}s_{i_2}\cdots s_{i_{p-1}}$ is a reduced expression of $w$.
We can define $J(x^\pm_\alpha)$ as
\begin{equation*}
J(x^\pm_\alpha)=\tau_{i_1}\tau_{i_2}\cdots\tau_{i_{p-1}}J(x^\pm_{j}).
\end{equation*} 
\begin{Lemma}[Proposition 3.21 in \cite{GNW}]\label{J}
The following relation holds:
\begin{equation*}
[J(h_i),x^\pm_\alpha]=\pm(\alpha_i,\alpha)J(x_\alpha^\pm)\pm c_{\alpha,i}x_\alpha^\pm
\end{equation*}
for some $c_{\alpha,i}\in\mathbb{C}$.
\end{Lemma}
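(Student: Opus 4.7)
The plan is to induct on the length $p-1$ of a reduced expression $w = s_{i_1}\cdots s_{i_{p-1}}$ realizing $\alpha = w\alpha_j$ as a Weyl translate of a simple root, using the base case for simple roots and the transport property of the automorphisms $\tau_i$.

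For the base case, when $\alpha=\alpha_j$ is simple and $i=j$, the identity $[J(h_j),x^\pm_j]=\pm2 J(x^\pm_j)=\pm a_{jj}J(x^\pm_j)$ is immediate from the definition $J(x^\pm_j)=\pm\tfrac{1}{2}[J(h_j),x^\pm_j]$, so $c_{\alpha_j,j}=0$. For $i\neq j$, I would split $J(h_i)=\widetilde{H}_{i,1}-A_i+A_{i+1}$ and handle each piece separately: the commutator with $\widetilde{H}_{i,1}$ is governed by \eqref{Eq2.5}--\eqref{Eq2.7}, while $[A_i,X^\pm_{j,0}]$ and $[A_{i+1},X^\pm_{j,0}]$ reduce to commutators of loop-algebra elements inside the completion $\widetilde{Y}_{\hbar,\ve}(\widehat{\mathfrak{sl}}(n))$ and can be evaluated term by term. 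Recombining the three pieces yields the desired right-hand side, with $c_{\alpha_j,i}$ absorbing the scalar shifts produced at the affine nodes by \eqref{Eq2.6} and \eqref{Eq2.7}.

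For the inductive step, assume the lemma for $\beta=s_{i_2}\cdots s_{i_{p-1}}\alpha_j$, so that $\alpha=s_{i_1}\beta$. Since $\tau_{i_1}$ is an algebra automorphism and, by definition, $\tau_{i_1}(x^\pm_\beta)=x^\pm_\alpha$ and $\tau_{i_1}(J(x^\pm_\beta))=J(x^\pm_\alpha)$, applying $\tau_{i_1}$ to the inductive identity $[J(h_k),x^\pm_\beta]=\pm(\alpha_k,\beta)J(x^\pm_\beta)\pm c_{\beta,k}x^\pm_\beta$ gives
\begin{equation*}
[\tau_{i_1}(J(h_k)),x^\pm_\alpha]=\pm(\alpha_k,\beta)J(x^\pm_\alpha)\pm c_{\beta,k}x^\pm_\alpha.
\end{equation*}
Choosing $k$ with $\alpha_k=s_{i_1}\alpha_i$ makes $(\alpha_k,\beta)=(\alpha_i,\alpha)$ by $W$-invariance of the form, so the task reduces to rewriting $\tau_{i_1}(J(h_k))$ as $J(h_i)$ modulo a scalar multiple of $h_{i_1}$ together with a central scalar, both of which only contribute to the coefficient $c_{\alpha,i}$.

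The main obstacle is precisely this transformation law: verifying $\tau_i(J(h_k))-J(s_i h_k)\in\mathbb{C}\cdot h_{i}+\mathbb{C}$ inside $\widetilde{Y}_{\hbar,\ve}(\widehat{\mathfrak{sl}}(n))$. The pieces $A_j$ are infinite sums of quadratic expressions in loop generators, and $\tau_i=\exp(\ad x^+_{i,0})\exp(-\ad x^-_{i,0})\exp(\ad x^+_{i,0})$ is only a formal triple exponential, so showing that all higher-order terms in $\ad x^\pm_{i,0}$ collapse onto $\mathbb{C}\cdot h_i$ plus scalars requires careful bookkeeping in the completion. This cancellation, most delicate around the affine nodes $0$ and $n-1$ where the shifts $(\ve+\tfrac{n}{2}\hbar)$ enter through \eqref{Eq2.6}, \eqref{Eq2.7}, \eqref{Eq2.9}, is what ultimately produces the constants $c_{\alpha,i}$ in the statement.
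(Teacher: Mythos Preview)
The paper does not prove this lemma; it is quoted verbatim as Proposition~3.21 of \cite{GNW}, so there is no in-paper argument to compare against.

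Your inductive strategy has a genuine gap at the step ``Choosing $k$ with $\alpha_k=s_{i_1}\alpha_i$.'' Simple reflections do not permute the simple roots: if $a_{i_1,i}=-1$ then $s_{i_1}\alpha_i=\alpha_i+\alpha_{i_1}$, which is not simple, and if $i=i_1$ then $s_{i_1}\alpha_i=-\alpha_{i_1}$. Hence for most pairs $(i,i_1)$ no such index $k$ exists, and the reduction you describe cannot be carried out as stated. The workable version is to prove a transformation law of the form
\[
\tau_{i_1}^{\pm1}\bigl(J(h_i)\bigr)\;\equiv\;J(h_i)-a_{i_1,i}\,J(h_{i_1})\pmod{\mathbb{C}\,h_{i_1}+\mathbb{C}},
\]
mirroring the Cartan action $s_{i_1}h_i=h_i-a_{i_1,i}h_{i_1}$, and then apply the inductive hypothesis at the shorter root $\beta$ for \emph{both} indices $i$ and $i_1$, taking the appropriate linear combination. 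You correctly identify this transformation law (and the delicate bookkeeping at the affine nodes) as the technical heart of the matter; but the single-index shortcut you propose is not available, and the argument as written does not close without the linear-combination fix.
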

\section{A homomorphism from the affine Yangian $Y_{\hbar,\ve}(\widehat{\mathfrak{sl}}(n))$ to the affine Yangian $Y_{\hbar,\ve}(\widehat{\mathfrak{sl}}(n+1))$}
In this section, we will construct a homomorphism from the affine Yangian $Y_{\hbar,\ve}(\widehat{\mathfrak{sl}}(n))$ to the degreewise completion of the affine Yangian $Y_{\hbar,\ve}(\widehat{\mathfrak{sl}}(n+1))$.
\begin{Theorem}\label{Main}
There exists an algebra homomorphism
\begin{equation*}
\Psi\colon Y_{\hbar,\ve}(\widehat{\mathfrak{sl}}(n))\to \widetilde{Y}_{\hbar,\ve}(\widehat{\mathfrak{sl}}(n+1))
\end{equation*}
determined by
\begin{gather*}
\Psi(H_{i,0})=\begin{cases}
H_{0,0}+H_{n,0}&\text{ if }i=0,\\
H_{i,0}&\text{ if }i\neq 0,
\end{cases}\\
\Psi(X^+_{i,0})=\begin{cases}
E_{n,1}t&\text{ if }i=0,\\
E_{i,i+1}&\text{ if }i\neq 0,
\end{cases}\ 
\Psi(X^-_{i,0})=\begin{cases}
E_{1,n}t^{-1}&\text{ if }i=0,\\
E_{i+1,i}&\text{ if }i\neq 0,
\end{cases}
\end{gather*}
and
\begin{align*}
\Psi(H_{i,1})&= H_{i,1}-\hbar\displaystyle\sum_{s \geq 0} \limits E_{i,n+1}t^{-s-1} E_{n+1,i}t^{s+1}+\hbar\displaystyle\sum_{s \geq 0}\limits E_{i+1,n+1}t^{-s-1} E_{n+1,i+1}t^{s+1},\\
\Psi(X^+_{i,1})&=X^+_{i,1}-\hbar\displaystyle\sum_{s \geq 0}\limits E_{i,n+1}t^{-s-1} E_{n+1,i+1}t^{s+1},\\
\Psi(X^-_{i,1})&=X^-_{i,1}-\hbar\displaystyle\sum_{s \geq 0}\limits E_{i+1,n+1}t^{-s-1} E_{n+1,i}t^{s+1}
\end{align*}
for $i\neq 0$. In particular, we have
\begin{align*}
\Psi(\widetilde{H}_{i,1})&= \widetilde{H}_{i,1}-\hbar\displaystyle\sum_{s \geq 0} \limits E_{i,n+1}t^{-s-1} E_{n+1,i}t^{s+1}+\hbar\displaystyle\sum_{s \geq 0}\limits E_{i+1,n+1}t^{-s-1} E_{n+1,i+1}t^{s+1}
\end{align*}
for $i\neq 0$.
\end{Theorem}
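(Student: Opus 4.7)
The plan is to verify directly that the map $\Psi$, defined on generators as in the theorem, respects each of the defining relations \eqref{Eq2.1}--\eqref{Eq2.10} of the minimalistic presentation in Proposition~\ref{Prop32}. Since $\Psi$ is specified only on $H_{i,0}, X^\pm_{i,0}$ (all $i$) and on $H_{i,1}, X^\pm_{i,1}$ (for $i \neq 0$), one first notes that the values on $X^\pm_{0,1}$ and $H_{0,1}$ are forced: $\Psi(X^\pm_{0,1})$ is the unique element produced by applying \eqref{Eq2.7} to $\Psi(\widetilde{H}_{n-1,1})$ and $\Psi(X^\pm_{0,0})$, and $\Psi(H_{0,1})$ is then defined via \eqref{Eq2.3}. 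The verification of all remaining identities is carried out inside the completion $\widetilde{Y}_{\hbar,\ve}(\widehat{\mathfrak{sl}}(n+1))$.

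The degree-zero relations \eqref{Eq2.1}, \eqref{Eq2.2}, \eqref{Eq2.4}, \eqref{Eq2.10} are handled first. The images $\Psi(H_{i,0}), \Psi(X^\pm_{i,0})$ generate a copy of $U(\widehat{\mathfrak{sl}}(n))$ inside $U(\widehat{\mathfrak{sl}}(n+1)) \subset Y_{\hbar,\ve}(\widehat{\mathfrak{sl}}(n+1))$: the non-affine Chevalley generators go to the matrix units of the standard $\mathfrak{sl}(n)$ sitting in the first $n$ rows and columns of $\mathfrak{gl}(n+1)$, while $\Psi(X^+_{0,0}) = E_{n,1}t$ supplies the affine node of that $\widehat{\mathfrak{sl}}(n)$. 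Each of these relations then reduces to a standard bracket computation in $\mathfrak{gl}(n+1)[t^{\pm 1}]$ with its usual central extension.

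For the degree-one relations \eqref{Eq2.5}, \eqref{Eq2.8}, and the case of \eqref{Eq2.3} with $i \neq 0$, I would decompose $\Psi(\widetilde{H}_{i,1}) = \widetilde{H}_{i,1} + C^H_i$ and $\Psi(X^\pm_{i,1}) = X^\pm_{i,1} + C^\pm_i$, where $C^H_i, C^\pm_i$ are the explicit infinite correction sums supported on the index $n+1$. The \emph{principal part} involving $\widetilde{H}_{i,1}, X^\pm_{i,1}$ already satisfies the target version of each relation because those elements obey the same minimalistic relations in $Y_{\hbar,\ve}(\widehat{\mathfrak{sl}}(n+1))$. What remains is to check that the brackets of the corrections with the Chevalley images, and of the corrections with each other, produce the correct cancellations; here the $J$-element formalism is effective---one relates the correction sums to the difference between the auxiliary element $A_i$ defined after Lemma~\ref{J} computed for $\widehat{\mathfrak{sl}}(n+1)$ and the same sum computed for $\widehat{\mathfrak{sl}}(n)$, and then applies Lemma~\ref{J} to reduce each bracket to an explicit Cartan scalar.

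The main obstacle is the verification of the affine-node relations \eqref{Eq2.6}, \eqref{Eq2.7}, and \eqref{Eq2.9}, where the scalar shift $\ve + \tfrac{n}{2}\hbar$ of the source must be produced from the analogous shift $\ve + \tfrac{n+1}{2}\hbar$ of the target plus a $-\tfrac{\hbar}{2}$ contribution arising from bracketing the correction sums with $E_{n,1}t$ or $E_{1,n}t^{-1}$. This $\tfrac{\hbar}{2}$ discrepancy is exactly the contribution of the additional index $n+1$, and tracking it requires careful bookkeeping of the infinite sums in the completion. Once these are checked, the affine case of \eqref{Eq2.3} and the Serre-type identities \eqref{Eq2.10} involving $X^\pm_{0,0}$ follow by bracketing the established identities with the Chevalley generators and invoking the degree-zero relations already verified.
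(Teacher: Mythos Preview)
Your overall strategy---verify each defining relation of the minimalistic presentation---matches the paper's, but there is a genuine gap: you have misclassified \eqref{Eq2.1}. That relation is $[H_{i,r},H_{j,s}]=0$ for all $r,s\in\{0,1\}$, and the case $r=s=1$ is not a degree-zero identity in $\widehat{\mathfrak{sl}}(n+1)$ at all; in fact it is the single longest and most delicate verification in the entire proof. The paper devotes all of \S3.7 to showing $[\Psi(\widetilde H_{i,1}),\Psi(\widetilde H_{j,1})]=0$, and it is precisely \emph{there} (not in \eqref{Eq2.5} or \eqref{Eq2.8}) that the $J(h_i)$ formalism and Lemma~\ref{J} are invoked. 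Writing $\Psi(\widetilde H_{i,1})=\widetilde H_{i,1}-P_i+P_{i+1}$ with $P_i=\hbar\sum_{s\ge0}E_{i,n+1}t^{-s-1}E_{n+1,i}t^{s+1}$, one reduces via Lemma~\ref{J} to the identity $[A_i,P_j]-[A_j,P_i]+[P_i,P_j]=0$, which then requires a lengthy term-by-term cancellation (equations \eqref{551-0}--\eqref{551-7} in the paper). Your ``difference of $A_i$'s'' heuristic does not by itself close this; the computation is genuinely intricate and cannot be absorbed into the degree-zero paragraph.

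A secondary point: for the $i=0$ instances of \eqref{Eq2.5} and \eqref{Eq2.8} your ``principal part already satisfies the target relation'' shortcut fails. The principal part of $\Psi(\widetilde H_{0,1})$ is $\widetilde H_{0,1}+\widetilde H_{n,1}+(\ve+\tfrac{n}{2}\hbar)H_{n,0}$ and that of $\Psi(X^+_{0,1})$ is $[X^+_{n,0},X^+_{0,1}]$; bracketing these with $E_{n,1}t=[X^+_{n,0},X^+_{0,0}]$ is not an instance of \eqref{Eq2.5} or \eqref{Eq2.8} in the target but requires the target's \eqref{Eq2.9} together with the Serre-type identities \eqref{Eq2.10} and \eqref{gather2}. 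The paper handles these cases by explicit computation in \S3.2 and \S3.5, separate from the generic $i\neq 0$ argument.
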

By Theorem~\ref{Main}, we can easily compute $\Psi(X^+_{0,1})$ and $\Psi(H_{0,1})$.
\begin{Corollary}\label{Cor}
The following equations hold:
\begin{align*}
\Psi(X^+_{0,1})&=[X^+_{n,0},X^+_{0,1}]-\hbar\displaystyle\sum_{s \geq 0} \limits E_{n,n+1}t^{-s} E_{n+1,1}t^{s+1},\\
\Psi(X^-_{0,1})&=[X^-_{0,1},X^-_{n,0}]-\hbar\displaystyle\sum_{s \geq 0} \limits E_{1,n+1}t^{-s-1} E_{n+1,n}t^{s},\\
\Psi(H_{0,1})&=H_{0,1}+H_{n,1}+(\ve+\dfrac{\hbar}{2}n)H_{n,0}+\hbar H_{n,0}H_{0,0}\\
&\quad-\hbar\displaystyle\sum_{s \geq 0} \limits E_{n,n+1}t^{-s-1} E_{n+1,n}t^{s+1}+\hbar\displaystyle\sum_{s \geq 0} \limits E_{1,n+1}t^{-s-1} E_{n+1,1}t^{s+1}.
\end{align*}
In particular, we obtain 
\begin{align*}
\Psi(\widetilde{H}_{0,1})&=\widetilde{H}_{0,1}+\widetilde{H}_{n,1}+(\ve+\dfrac{\hbar}{2}n)H_{n,0}\\
&\quad-\hbar\displaystyle\sum_{s \geq 0} \limits E_{n,n+1}t^{-s-1} E_{n+1,n}t^{s+1}+\hbar\displaystyle\sum_{s \geq 0} \limits E_{1,n+1}t^{-s-1} E_{n+1,1}t^{s+1}.
\end{align*}
\end{Corollary}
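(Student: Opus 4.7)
The plan is to reduce everything to the formulas prescribed by Theorem~\ref{Main}. The key tool is the source relation \eqref{Eq2.7}, which reads $[\tilde{H}_{n-1,1}, X^\pm_{0,0}] = \mp(X^\pm_{0,1} - (\ve + \frac{n}{2}\hbar)X^\pm_{0,0})$; solving for $X^\pm_{0,1}$ and applying $\Psi$ expresses $\Psi(X^\pm_{0,1})$ through $[\Psi(\tilde{H}_{n-1,1}), \Psi(X^\pm_{0,0})]$ and $\Psi(X^\pm_{0,0})$, both of which are already prescribed by Theorem~\ref{Main}.

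For the $+$ case, I would expand $[\Psi(\tilde{H}_{n-1,1}), E_{n,1}t]$ using the three-term formula for $\Psi(\tilde{H}_{n-1,1})$ from Theorem~\ref{Main}. The leading piece $[\tilde{H}_{n-1,1}, E_{n,1}t]$ in the target is evaluated via Jacobi applied to the factorization $E_{n,1}t = [X^+_{n,0}, X^+_{0,0}]$ in $Y_{\hbar,\ve}(\widehat{\mathfrak{sl}}(n+1))$; because $n \geq 3$ makes the nodes $n-1$ and $0$ non-adjacent in the affine Dynkin diagram of $\widehat{\mathfrak{sl}}(n+1)$, the target instances of \eqref{Eq2.5}--\eqref{Eq2.7} force $[\tilde{H}_{n-1,1}, X^+_{0,0}] = 0$ and $[\tilde{H}_{n-1,1}, X^+_{n,0}] = -X^+_{n,1}$, so the leading piece equals $-[X^+_{n,1}, X^+_{0,0}]$, which the target instance of \eqref{Eq2.9} applied to the pair $(0,n)$ rewrites as $[X^+_{n,0}, X^+_{0,1}]$ plus an anticommutator contribution and a scalar multiple of $E_{n,1}t$. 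The brackets of $E_{n,1}t$ with the two matrix-unit sums in $\Psi(\tilde{H}_{n-1,1})$ are elementary commutators in $\widehat{\mathfrak{sl}}(n+1)$: the one involving $E_{n-1,n+1}$ and $E_{n+1,n-1}$ vanishes, while the one involving $E_{n,n+1}$ and $E_{n+1,n}$ shifts the index by one and produces $\sum_{s \geq 1} E_{n,n+1}t^{-s}E_{n+1,1}t^{s+1}$.

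Assembling these contributions, the two stray multiples of $E_{n,1}t$ cancel and the missing $s=0$ summand is exactly supplied by the anticommutator $\{X^+_{0,0}, X^+_{n,0}\} = 2E_{n,n+1}E_{n+1,1}t - E_{n,1}t$ coming from \eqref{Eq2.9}, yielding the claimed closed-form expression for $\Psi(X^+_{0,1})$. The derivation of $\Psi(X^-_{0,1})$ is entirely parallel; $\Psi(H_{0,1})$ then follows from $H_{0,1} = [X^+_{0,1}, X^-_{0,0}]$ (equation~\eqref{Eq2.3}) by bracketing the derived formula for $\Psi(X^+_{0,1})$ with $E_{1,n}t^{-1} = \Psi(X^-_{0,0})$ in the target; and the $\Psi(\tilde{H}_{0,1})$ formula drops out of $\tilde{H}_{0,1} = H_{0,1} - \frac{\hbar}{2}H_{0,0}^2$ once $\Psi(H_{0,0}) = H_{0,0} + H_{n,0}$ produces the cross-term $\hbar H_{n,0}H_{0,0}$ that gets absorbed. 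The main obstacle is pure bookkeeping: tracking the shift $n \leadsto n+1$ in the constant $\ve + \frac{n}{2}\hbar$ between the source and target uses of \eqref{Eq2.7} and \eqref{Eq2.9}, and reindexing the summations so that the residual $s \geq 1$ sum combines with the $s=0$ term released by the anticommutator into the closed sum $\sum_{s \geq 0} E_{n,n+1}t^{-s}E_{n+1,1}t^{s+1}$ of the corollary.
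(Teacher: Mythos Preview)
Your proposal is correct and follows the same overall strategy as the paper: express $X^\pm_{0,1}$ via a bracket $[\tilde H_{*,1},X^\pm_{0,0}]$, apply $\Psi$, and evaluate using the Theorem~\ref{Main} formula for $\Psi(\tilde H_{*,1})$ together with the factorization $E_{n,1}t=[X^+_{n,0},X^+_{0,0}]$ in the target. The difference is your choice of auxiliary node. The paper uses node $1$ and the source relation \eqref{Eq2.5}, so that in the target $[\tilde H_{1,1},X^+_{n,0}]=0$ and $[\tilde H_{1,1},X^+_{0,0}]=-X^+_{0,1}$ directly by \eqref{Eq2.5}; this yields $[X^+_{n,0},X^+_{0,1}]$ immediately, and the single nonvanishing matrix-unit bracket (coming from $E_{1,n+1}$) already runs over $s\ge 0$, so no reindexing or anticommutator bookkeeping is needed. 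Your choice of node $n-1$ via \eqref{Eq2.7} instead lands on $-[X^+_{n,1},X^+_{0,0}]$, which then has to be converted to $[X^+_{n,0},X^+_{0,1}]$ using the target instance of \eqref{Eq2.9}; this is the source of your extra anticommutator term and the $n\leadsto n+1$ shift in the constant that you have to track. Both reach the same answer; the paper's route is one step shorter for $\Psi(X^\pm_{0,1})$. For $\Psi(H_{0,1})$ and $\Psi(\tilde H_{0,1})$ the two arguments are identical.
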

\begin{proof}
By the definition of $\Psi$ and \eqref{Eq2.5}, we have
\begin{align*}
\Psi(X^+_{0,1})&=-[\Psi(\widetilde{H}_{1,1}),\Psi(X^+_{0,0})]\\
&=-[\widetilde{H}_{1,1},[X^+_{n,0},X^+_{0,0}]]+\hbar[\displaystyle\sum_{s \geq 0} \limits E_{1,n+1}t^{-s-1} E_{n+1,1}t^{s+1},E_{n,1}t]\\
&\quad-\hbar[\displaystyle\sum_{s \geq 0}\limits E_{2,n+1}t^{-s-1} E_{n+1,2}t^{s+1},E_{n,1}t]\\
&=[X^+_{n,0},X^+_{0,1}]-\hbar\displaystyle\sum_{s \geq 0} \limits E_{n,n+1}t^{-s} E_{n+1,1}t^{s+1},
\end{align*}
where the second equality is due to $E_{n,1}t=[X^+_{n,0},X^+_{0,0}]$.
Similarly to $\Psi(X^+_{0,1})$, we can compute $\Psi(X^-_{0,1})$.

By \eqref{Eq2.3}, we obtain
\begin{align}
\Psi(H_{0,1})&=[\Psi(X^+_{0,1}),\Psi(X^-_{0,0})]\nonumber\\
&=[[X^+_{n,0},X^+_{0,1}],[X^-_{0,0},X^-_{n,0}]]-[\hbar\displaystyle\sum_{s \geq 0} \limits E_{n,n+1}t^{-s} E_{n+1,1}t^{s+1},E_{1,n}t^{-1}]\nonumber\\
&=[[X^+_{n,0},H_{0,1}],X^-_{n,0}]+[X^-_{0,0},[H_{n,0},X^+_{0,1}]]-[\hbar\displaystyle\sum_{s \geq 0} \limits E_{n,n+1}t^{-s} E_{n+1,1}t^{s+1},E_{1,n}t^{-1}],\label{al-0}
\end{align}
where the last equality is due to \eqref{Eq2.2} and \eqref{Eq2.3}.
By \eqref{Eq2.3} and \eqref{Eq2.4}, we have
\begin{align}
[X^-_{0,0},[H_{n,0},X^+_{0,1}]]&=-[X^-_{0,0},X^+_{0,1}]=H_{0,1}.\label{al-1}
\end{align}
By \eqref{Eq2.3} and \eqref{Eq2.5}-\eqref{Eq2.7}, we have
\begin{align}
&\quad[[X^+_{n,0},H_{0,1}],X^-_{n,0}]\nonumber\\
&=-[[\widetilde{H}_{0,1}+\dfrac{\hbar}{2}H_{0,0}^2,X^+_{n,0}],X^-_{n,0}]\nonumber\\
&=[X^+_{n,1}+(\ve+\dfrac{\hbar}{2}(n+1))X^+_{n,0},X^-_{n,0}]+\dfrac{\hbar}{2}[\{H_{0,0},X^+_{n,0}\},X^-_{n,0}]\nonumber\\
&=H_{n,1}+(\ve+\dfrac{\hbar}{2}(n+1))H_{n,0}+\dfrac{\hbar}{2}\{X^-_{n,0},X^+_{n,0}\}+\dfrac{\hbar}{2}\{H_{0,0},H_{n,0}\}\nonumber\\
&=H_{n,1}+(\ve+\dfrac{\hbar}{2}(n+1))H_{n,0}+\hbar X^+_{n,0}X^-_{n,0}-\dfrac{\hbar}{2}H_{n,0}+\hbar H_{0,0},H_{n,0},\label{al-1.5}
\end{align}
where the second equality is due to \eqref{Eq2.6} and \eqref{Eq2.4}, the third equality is due to \eqref{Eq2.2} and \eqref{Eq2.3} and the last equality is due to \eqref{Eq2.1} and the relation
\begin{align*}
\dfrac{\hbar}{2}\{X^-_{n,0},X^+_{n,0}\}&=\hbar X^+_{n,0}X^-_{n,0}-\dfrac{\hbar}{2}H_{n,0}.
\end{align*}
By a direct computation, we obtain
\begin{align}
&\quad-[\hbar\displaystyle\sum_{s \geq 0} \limits E_{n,n+1}t^{-s} E_{n+1,1}t^{s+1},E_{1,n}t^{-1}]\nonumber\\
&=-\hbar\displaystyle\sum_{s \geq 0} \limits E_{n,n+1}t^{-s} E_{n+1,n}t^{s}+\hbar\displaystyle\sum_{s \geq 0} \limits E_{1,n+1}t^{-s-1} E_{n+1,1}t^{s+1}.\label{al-2}
\end{align}
Applying \eqref{al-1} and \eqref{al-2} to \eqref{al-0}, we obtain
\begin{align*}
\Psi(H_{0,1})&=H_{0,1}+H_{n,1}+(\ve+\dfrac{\hbar}{2}n)H_{n,0}+\hbar H_{n,0}H_{0,0}\\
&\quad-\hbar\displaystyle\sum_{s \geq 0} \limits E_{n,n+1}t^{-s-1} E_{n+1,n}t^{s+1}+\hbar\displaystyle\sum_{s \geq 0} \limits E_{1,n+1}t^{-s-1} E_{n+1,1}t^{s+1}.
\end{align*}
We complete the proof.
\end{proof}
In order to prove Theorem~\ref{Main}, it is enough to show that $\Psi$ is compatible with \eqref{Eq2.1}-\eqref{Eq2.10}. By the definition of $\Psi$, $\Psi$ is compatible with \eqref{Eq2.2}, \eqref{Eq2.4} and \eqref{Eq2.10}. We will prove the compatibility with other relations of Proposition~\ref{Prop32} in the following subsections.
\subsection{Compatibility with \eqref{Eq2.3}}
The case that $i=j=0$ has already been proved in the proof of Corollary~\ref{Cor}. We only show the case that $i,j\neq 0$. Other cases are proven in a similar way. We obtain
\begin{align*}
&\quad[\Psi(X^+_{i,1}),\Psi(X^-_{j,0})]\\
&=[X^+_{i,1}-\hbar\displaystyle\sum_{s \geq 0}\limits E_{i,n+1}t^{-s-1} E_{n+1,i+1}t^{s+1},E_{j+1,j}]\\
&=\delta_{i,j}H_{i,1}-\delta_{i,j}\hbar\displaystyle\sum_{s \geq 0}\limits E_{i,n+1}t^{-s-1} E_{n+1,i}t^{s+1}+\delta_{i,j}\hbar\displaystyle\sum_{s \geq 0}\limits E_{i+1,n+1}t^{-s-1} E_{n+1,i+1}t^{s+1}\\
&=\delta_{i,j}\Psi(H_{i,1}),
\end{align*}
where the first and last equalities are due to the definition of $\Psi$ and the second equality is due to \eqref{Eq2.3}.
\subsection{Compatibility with \eqref{Eq2.5}}
We only show the case that $i=j=0$ and the sign is $+$, The other cases are proven in a similar way. By the definition of $\Psi$, we have
\begin{align}
&\quad[\Psi(\widetilde{H}_{0,1}),\Psi(X^+_{0,0})]\nonumber\\
&=[\widetilde{H}_{0,1}+\widetilde{H}_{n,1},E_{n,1}t]+(\ve+\dfrac{\hbar}{2}n)[H_{n,0},E_{n,1}t]\nonumber\\
&\quad-[\hbar\displaystyle\sum_{s \geq 0} \limits E_{n,n+1}t^{-s-1} E_{n+1,n}t^{s+1},E_{n,1}t]+[\hbar\displaystyle\sum_{s \geq 0} \limits E_{1,n+1}t^{-s-1} E_{n+1,1}t^{s+1},E_{n,1}t].\label{2.5-1}
\end{align}
By a direct computation, we obtain
\begin{align}
(\ve+\dfrac{\hbar}{2}n)[H_{n,0},E_{n,1}t]&=(\ve+\dfrac{\hbar}{2}n)E_{n,1}t\label{2.5-2}
\end{align}
and
\begin{align}
&\quad-[\hbar\displaystyle\sum_{s \geq 0} \limits E_{n,n+1}t^{-s-1} E_{n+1,n}t^{s+1},E_{n,1}t]+[\hbar\displaystyle\sum_{s \geq 0} \limits E_{1,n+1}t^{-s-1} E_{n+1,1}t^{s+1},E_{n,1}t]\nonumber\\
&=-\hbar\displaystyle\sum_{s \geq 0} \limits E_{n,n+1}t^{-s-1} E_{n+1,1}t^{s+2}-\hbar\displaystyle\sum_{s \geq 0} \limits E_{n,n+1}t^{-s} E_{n+1,1}t^{s+1}\nonumber\\
&=-2\hbar\displaystyle\sum_{s \geq 0} \limits E_{n,n+1}t^{-s} E_{n+1,1}t^{s+1}+\hbar X^+_{n,0}X^+_{0,0}.\label{2.5-2.5}
\end{align}
By \eqref{Eq2.5}-\eqref{Eq2.7}, we have
\begin{align}
&\quad[\widetilde{H}_{0,1}+\widetilde{H}_{n,1},E_{n,1}t]=[\widetilde{H}_{0,1}+\widetilde{H}_{n,1},[X^+_{n,0},X^+_{0,0}]]\nonumber\\
&=[X^+_{n,0},X^+_{0,1}-(\ve+\dfrac{n+1}{2}\hbar)X^+_{0,0}]+[X^+_{n,1}+(\ve+\dfrac{n+1}{2}\hbar)X^+_{n,0},X^+_{0,0}]\nonumber\\
&=[X^+_{n,0},X^+_{0,1}]+[X^+_{n,1},X^+_{0,0}]\nonumber\\
&=2[X^+_{n,0},X^+_{0,1}]-\dfrac{\hbar}{2}\{X^+_{0,0},X^+_{n,0}\}+(\ve+\dfrac{n+1}{2}\hbar)[X^+_{0,0},X^+_{n,0}]\nonumber\\
&=2[X^+_{n,0},X^+_{0,1}]-\dfrac{\hbar}{2}\{X^+_{0,0},X^+_{n,0}\}-(\ve+\dfrac{n+1}{2}\hbar)E_{n,1}t\nonumber\\
&=2[X^+_{n,0},X^+_{0,1}]-\hbar X^+_{n,0}X^+_{0,0}-\dfrac{\hbar}{2}[X^+_{0,0},X^+_{n,0}]-(\ve+\dfrac{n+1}{2}\hbar)E_{n,1}t,\label{2.5-3}
\end{align}
where the first equality is due to $[X^+_{n,0},X^+_{0,0}]=E_{n,1}t$, the second equality is due to \eqref{Eq2.5}-\eqref{Eq2.7}, the 4-th equality is due to \eqref{Eq2.9}.
Applying \eqref{2.5-2}-\eqref{2.5-3} to \eqref{2.5-1}, we obtain
\begin{align*}
[\Psi(\widetilde{H}_{0,1}),\Psi(X^+_{0,0})]
&=2[X^+_{n,0},X^+_{0,1}]-2\hbar\displaystyle\sum_{s \geq 0} \limits E_{n,n+1}t^{-s} E_{n+1,1}t^{s+1}\\
&=2\Psi(X^+_{0,1}).
\end{align*}
\subsection{Compatibility with \eqref{Eq2.6}}
We only prove the $+$ case. The $+$ case can be shown in the same way.
By the definition of $\Psi$, we have
\begin{align*}
&\quad[\Psi(\widetilde{H}_{0,1}),\Psi(X^+_{n-1,0})]\\
&=[\widetilde{H}_{0,1}+\widetilde{H}_{n,1},X^+_{n-1,0}]+(\ve+\dfrac{\hbar}{2}n)[H_{n,0},E_{n-1,n}]\\
&\quad-[\hbar\displaystyle\sum_{s \geq 0} \limits E_{n,n+1}t^{-s-1} E_{n+1,n}t^{s+1},E_{n-1,n}]+[\hbar\displaystyle\sum_{s \geq 0} \limits E_{1,n+1}t^{-s-1} E_{n+1,1}t^{s+1},E_{n-1,n}]\\
&=-X^+_{n-1,1}-(\ve+\dfrac{\hbar}{2}n)E_{n-1,n}+\hbar\displaystyle\sum_{s \geq 0} \limits E_{n-1,n+1}t^{-s-1} E_{n+1,n}t^{s+1}\\
&=-(\Psi(X^+_{n-1,1})+(\ve+\dfrac{\hbar}{2}n)\Psi(X^+_{n-1,0})),
\end{align*}
where the first last equalities are due to the definition of $\Psi$ and the second equality is due to \eqref{Eq2.5}.
\subsection{Compatibility with \eqref{Eq2.7}}
By the definition of $\Psi$, we have
\begin{align}
&\quad[\Psi(\widetilde{H}_{n-1,1}),\Psi(X^+_{0,0})]\nonumber\\
&=[\widetilde{H}_{n-1,1},E_{n,1}t]-[\hbar\displaystyle\sum_{s \geq 0} \limits E_{n-1,n+1}t^{-s-1} E_{n+1,n-1}t^{s+1},E_{n,1}t]\nonumber\\
&\quad+[\hbar\displaystyle\sum_{s \geq 0}\limits E_{n,n+1}t^{-s-1} E_{n+1,n}t^{s+1},E_{n,1}t].\label{2.7-1}
\end{align}
By a direct computation, we obtain
\begin{align}
&\quad-[\hbar\displaystyle\sum_{s \geq 0} \limits E_{n-1,n+1}t^{-s-1} E_{n+1,n-1}t^{s+1},E_{n,1}t]+[\hbar\displaystyle\sum_{s \geq 0}\limits E_{n,n+1}t^{-s-1} E_{n+1,n}t^{s+1},E_{n,1}t]\nonumber\\
&=0+\hbar\displaystyle\sum_{s \geq 0}\limits E_{n,n+1}t^{-s-1} E_{n+1,1}t^{s+2}\nonumber\\
&=\hbar\displaystyle\sum_{s \geq 0}\limits E_{n,n+1}t^{-s} E_{n+1,1}t^{s+1}-\hbar X^+_{n,0}X^+_{0,0}.\label{2.7-2}
\end{align}
By \eqref{Eq2.5} and \eqref{Eq2.7}, we have
\begin{align}
&\quad[\widetilde{H}_{n-1,1},E_{n,1}t]=[\widetilde{H}_{n-1,1},[X^+_{n,0},X^+_{0,0}]]=-[X^+_{n,1},X^+_{0,0}]\nonumber\\
&=-[X^+_{n,0},X^+_{0,1}]+\dfrac{\hbar}{2}\{X^+_{0,0},X^+_{n,0}\}+(\ve+\dfrac{\hbar}{2}(n+1))[X^+_{n,0},X^+_{0,0}]\nonumber\\
&=-[X^+_{n,0},X^+_{0,1}]+\hbar X^+_{n,0}X^+_{0,0}+\dfrac{\hbar}{2}[X^+_{0,0},X^+_{n,0}]+(\ve+\dfrac{\hbar}{2}(n+1))[X^+_{n,0},X^+_{0,0}]\label
{2.7-3}
\end{align}
By applying \eqref{2.7-2} and \eqref{2.7-3} to \eqref{2.7-1}, we have
\begin{align*}
&\quad[\Psi(\widetilde{H}_{n-1,1}),\Psi(X^+_{0,0})]\nonumber\\
&=-[X^+_{n,0},X^+_{0,1}]+\hbar\displaystyle\sum_{s \geq 0}\limits E_{n,n+1}t^{-s} E_{n+1,1}t^{s+1}+(\ve+\dfrac{\hbar}{2}n)[X^+_{n,0},X^+_{0,0}]\nonumber\\
&=-(\Psi(X^+_{0,1})-(\ve+\dfrac{\hbar}{2}n)\Psi(X^+_{0,0})),
\end{align*}
\subsection{Compatibility with \eqref{Eq2.8}}
We only show the case that $i=0$ and the sign is $+$. The other cases are proven in a similar way.

{\bf Case 1}: $i=0,j\neq 0,n-1$

By the definition of $\Psi$ and the assumption that $j\neq 0,n-1$, we have
\begin{align*}
&\quad[\Psi(X^+_{0,0}),\Psi(X^+_{j,1})]\\
&=[[X^+_{n,0},X^+_{0,0}],X^+_{j,1}]-[E_{n,1}t,\hbar\displaystyle\sum_{s \geq 0}\limits E_{j,n+1}t^{-s-1} E_{n+1,j+1}t^{s+1}]\\
&=[[X^+_{n,0},X^+_{0,0}],X^+_{j,1}]-\delta_{j,1}\hbar\displaystyle\sum_{s \geq 0}\limits E_{n,n+1}t^{-s} E_{n+1,j+1}t^{s+1}.
\end{align*}
and
\begin{align*}
&\quad[\Psi(X^+_{0,1}),\Psi(X^+_{j,0})]\\
&=[[X^+_{n,0},X^+_{0,1}],X^+_{j,0}]-[\hbar\displaystyle\sum_{s \geq 0}\limits E_{n,n+1}t^{-s} E_{n+1,1}t^{s+1},E_{j,j+1}]\\
&=[[X^+_{n,0},X^+_{0,1}],X^+_{j,0}]-\delta_{j,1}\hbar\displaystyle\sum_{s \geq 0}\limits E_{n,n+1}t^{-s} E_{n+1,2}t^{s+1}.
\end{align*}
Thus, by a direct computation, we have
\begin{align*}
&\quad[\Psi(X^+_{0,1}),\Psi(X^+_{j,0})]-[\Psi(X^+_{0,0}),\Psi(X^+_{j,1})]\\
&=[[X^+_{n,0},X^+_{0,1}],X^+_{j,0}]-[[X^+_{n,0},X^+_{0,0}],X^+_{j,1}].
\end{align*}
We obtain
\begin{align}
&\quad[[X^+_{n,0},X^+_{0,1}],X^+_{j,0}]-[[X^+_{n,0},X^+_{0,0}],X^+_{j,1}]\nonumber\\
&=[X^+_{n,0},([X^+_{0,1},X^+_{j,0}]-[X^+_{0,0},X^+_{j,1}])]\nonumber\\
&=\dfrac{\hbar}{2}a_{0,j}[X^+_{n,0},\{X^+_{0,0},X^+_{j,0}\}]\nonumber\\
&=\dfrac{\hbar}{2}a_{0,j}\{[X^+_{n,0},X^+_{0,0}],X^+_{j,0}\}\nonumber\\
&=\dfrac{\hbar}{2}a_{0,j}\{\Psi(X^+_{0,0}),\Psi(X^+_{j,0})\},\label{aaa}
\end{align}
where the first equality is due to \eqref{gather1} and the assumption that $j\neq 0,n-1$, the second equality is due to \eqref{Eq2.8} and the last equlity is due to the assumption that $j\neq 0,n-1$.

{\bf Case 2}: $i=j=0$

In this case, \eqref{Eq2.8} is equivalent to
\begin{equation}
[X^+_{0,1},X^+_{0,0}]=\hbar(X^+_{0,0})^2.\label{Eq2.11}
\end{equation}
We will prove the compatibility with \eqref{Eq2.11}. By the definition of $\Psi$, we have
\begin{align}
&\quad[\Psi(X^+_{0,1}),\Psi(X^+_{0,0})]\nonumber\\
&=[[X^+_{n,0},X^+_{0,1}],[X^+_{n,0},X^+_{0,0}]]-[\hbar\displaystyle\sum_{s \geq 0} \limits E_{n,n+1}t^{-s} E_{n+1,1}t^{s+1},E_{n,1}t]\nonumber\\
&=[[X^+_{n,0},X^+_{0,1}],[X^+_{n,0},X^+_{0,0}]]-0.\label{2.8-1}
\end{align}
By \eqref{Eq2.10}, we obtain
\begin{align}
&\quad[[X^+_{n,0},X^+_{0,1}],[X^+_{n,0},X^+_{0,0}]]\nonumber\\
&=[X^+_{n,0},[X^+_{0,1},[X^+_{n,0},X^+_{0,0}]]]+[[X^+_{n,0},[X^+_{n,0},X^+_{0,0}]],X^+_{0,1}]\nonumber\\
&=[X^+_{n,0},[X^+_{0,1},[X^+_{n,0},X^+_{0,0}]]]+0,\label{2.8-1.5}
\end{align}
where the last equality is due to \eqref{Eq2.10}.
We obtain
\begin{align}
&\quad[X^+_{0,1},[X^+_{n,0},X^+_{0,0}]]\nonumber\\
&=\dfrac{1}{2}[X^+_{0,1},[X^+_{n,0},X^+_{0,0}]]+\dfrac{1}{2}([[X^+_{0,1},X^+_{n,0}],X^+_{0,0}]+[X^+_{n,0},[X^+_{0,1},X^+_{0,0}]])\nonumber\\
&=\dfrac{1}{2}([X^+_{0,1},[X^+_{n,0},X^+_{0,0}]]+[[X^+_{0,1},X^+_{n,0}],X^+_{0,0}])+\dfrac{1}{2}[X^+_{n,0},[X^+_{0,1},X^+_{0,0}]]\nonumber\\
&=0+\dfrac{\hbar}{2}[X^+_{n,0},(X^+_{0,0})^2]\nonumber\\
&=\dfrac{\hbar}{2}\{[X^+_{n,0},X^+_{0,0}],X^+_{0,0}\},\label{2.8-2}
\end{align}
where the third equality is due to \eqref{gather2} and \eqref{Eq2.11}. By applying \eqref{2.8-1.5} and \eqref{2.8-2} to \eqref{2.8-1}, we obtain
\begin{align*}
&\quad[\Psi(X^+_{0,1}),\Psi(X^+_{0,0})]\nonumber\\
&=[X^+_{n,0},\dfrac{\hbar}{2}\{[X^+_{n,0},X^+_{0,0}],X^+_{0,0}\}]\\
&=\dfrac{\hbar}{2}\{[X^+_{n,0},X^+_{0,0}],[X^+_{n,0},X^+_{0,0}]\}=\hbar(\Psi(X^+_{0,0}))^2
\end{align*}
by \eqref{Eq2.10}.
\subsection{Compatibility with \eqref{Eq2.9}}
We only show the $+$ case. The $-$ case can be proven in a similar way. By the definition of $\Psi$, we have
\begin{align}
&\quad[\Psi(X^+_{0,0}),\Psi(X^+_{n-1,1})]\nonumber\\
&=[[X^+_{n,0},X^+_{0,0}],X^+_{n-1,1}]-[E_{n,1}t,\hbar\displaystyle\sum_{s \geq 0}\limits E_{n-1,n+1}t^{-s-1} E_{n+1,n}t^{s+1}]\nonumber\\
&=[[X^+_{n,0},X^+_{0,0}],X^+_{n-1,1}]+\hbar\displaystyle\sum_{s \geq 0}\limits E_{n-1,n+1}t^{-s-1} E_{n+1,1}t^{s+2}\label{2.9.10}
\end{align}
and
\begin{align}
&\quad[\Psi(X^+_{0,1}),\Psi(X^+_{n-1,0})]\\
&=[[X^+_{n,0},X^+_{0,1}],X^+_{n-1,0}]-[\hbar\displaystyle\sum_{s \geq 0}\limits E_{n,n+1}t^{-s} E_{n+1,1}t^{s+1},E_{n-1,n}]\\
&=[[X^+_{n,0},X^+_{0,1}],X^+_{n-1,0}]+\hbar\displaystyle\sum_{s \geq 0}\limits E_{n-1,n+1}t^{-s} E_{n+1,1}t^{s+1}.\label{2.9.11}
\end{align}
By comparing the right hand sides of \eqref{2.9.10} and \eqref{2.9.11}, we have
\begin{align}
&\quad[\Psi(X^+_{0,1}),\Psi(X^+_{n-1,0})]-[\Psi(X^+_{0,0}),\Psi(X^+_{n-1,1})]\nonumber\\
&=[[X^+_{n,0},X^+_{0,1}],X^+_{n-1,0}]+\hbar E_{n-1,n+1}E_{n+1,1}t-[[X^+_{n,0},X^+_{0,0}],X^+_{n-1,1}].\label{Eq2.9-1}
\end{align}
We obtain
\begin{align}
&\quad[[X^+_{n,0},X^+_{0,1}],X^+_{n-1,0}]\nonumber\\
&=[[X^+_{n,1},X^+_{0,0}]+\dfrac{\hbar}{2}\{X^+_{0,0},X^+_{n,0}\}-(\ve+\dfrac{\hbar}{2}(n+1))[X^+_{0,0},X^+_{n-1,0}],X^+_{n-1,0}]\nonumber\\
&=[[X^+_{n,1},X^+_{0,0}],X^+_{n-1,0}]+\dfrac{\hbar}{2}\{X^+_{0,0},[X^+_{n,0},X^+_{n-1,0}]\}-(\ve+\dfrac{\hbar}{2}(n+1))[[X^+_{0,0},X^+_{n,0}],X^+_{n-1,0}]\nonumber\\
&=[[X^+_{n,1},X^+_{0,0}],X^+_{n-1,0}]-\dfrac{\hbar}{2}\{E_{n+1,1}t,E_{n-1,n+1}\}-(\ve+\dfrac{\hbar}{2}(n+1))E_{n-1,1}t,\label{Eq2.9-2}
\end{align}
where the second equality is due to \eqref{Eq2.9}.
By the similar way to \eqref{aaa}, we have
\begin{align}
&\quad[[X^+_{n,1},X^+_{0,0}],X^+_{n-1,0}]-[[X^+_{n,0},X^+_{0,0}],X^+_{n-1,1}]\nonumber\\
&=-\dfrac{\hbar}{2}\{[X^+_{n,0},X^+_{0,0}],X^+_{n-1,0}\}.\label{2.9-2}
\end{align}
By applying \eqref{2.9-2} to \eqref{Eq2.9-1}, we have
\begin{align*}
&\quad[\Psi(X^+_{0,1}),\Psi(X^+_{n-1,0})]-[\Psi(X^+_{0,0}),\Psi(X^+_{n-1,1})]\nonumber\\
&=-\dfrac{\hbar}{2}\{[X^+_{n,0},X^+_{0,0}],X^+_{n-1,0}\}-\dfrac{\hbar}{2}\{E_{n+1,1}t,E_{n-1,n+1}\}\nonumber\\
&\quad-(\ve+\dfrac{\hbar}{2}(n+1))E_{n-1,1}t+\hbar E_{n-1,n+1}E_{n+1,1}t\\
&=-\dfrac{\hbar}{2}\{[X^+_{n,0},X^+_{0,0}],X^+_{n-1,0}\}-(\ve+\dfrac{\hbar}{2}n)E_{n-1,1}t\\
&=-\dfrac{\hbar}{2}\{\Psi(X^+_{0,0}),\Psi(X^+_{n-1,0})\}+(\ve+\dfrac{\hbar}{2}n)[\Psi(X^+_{0,0}),\Psi(X^+_{n-1,0})].
\end{align*}
\subsection{Compatibility with \eqref{Eq2.1}}
By the definition of $\Psi(H_{i,1})$, $\Psi$ is compatible with \eqref{Eq2.1} in the case that $r+s\leq 1$. Thus, it is enough to prove $[\Psi(\widetilde{H}_{i,1}),\Psi(\widetilde{H}_{j,1})]=0$. We only show the case that $i,j\neq0$. The case that $i=0$ or $j=0$ can be proven in a similar way. By the definition of $\Psi$, we have
\begin{align*}
[\Psi(\widetilde{H}_{i,1}),\Psi(\widetilde{H}_{j,1})]
&=[\widetilde{H}_{i,1},\widetilde{H}_{j,1}]-[\widetilde{H}_{i,1},P_j-P_{j+1}]+[\widetilde{H}_{j,1},P_i-P_{i+1}]+[P_i-P_{i+1},P_j-P_{j+1}],
\end{align*}
where $P_i=\hbar\displaystyle\sum_{s \geq 0} \limits E_{i,n+1}t^{-s-1} E_{n+1,i}t^{s+1}$.
By the definition of $J(h_i)$, we have
\begin{align}
&\quad-[\widetilde{H}_{i,1},P_j-P_{j+1}]+[\widetilde{H}_{j,1},P_i-P_{i+1}]\nonumber\\
&=-[J(h_i),P_j-P_{j+1}]+[J(h_j),P_i-P_{i+1}]+[A_i-A_{i+1},P_j-P_{j+1}]-[A_j-A_{j+1},P_j-P_{j+1}].\label{551}
\end{align}
By Lemma~\ref{J} and the definition of $P_i$, we find that the sum of the first two terms of the right hand side of \eqref{551} are equal to zero. Thus, it is enough to show that
\begin{align}
[A_i,P_j]-[A_j,P_i]+[P_i,P_j]=0.\label{concl}
\end{align}
By a direct computation, we obtain
\begin{align}
[P_i,P_j]
&=\hbar^2\displaystyle\sum_{s,v \geq 0}\limits E_{j,n+1}t^{-v-1}E_{i,j}t^{v-s}E_{n+1,i}t^{s+1}\nonumber\\
&\quad-\hbar^2\displaystyle\sum_{s,v \geq 0}\limits E_{i,n+1}t^{-s-1} E_{j,i}t^{s-v}E_{n+1,j}t^{v+1}.\label{551-0}
\end{align}
By the definition of $A_i$, we can divide $[A_i,P_j]$ into four pieces:
\begin{align}
[A_i,P_j]
&=[\dfrac{\hbar}{2}\sum_{\substack{s\geq0\\u>i}}\limits E_{u,i}t^{-s}E_{i,u}t^s,P_j]-[\dfrac{\hbar}{2}\sum_{\substack{s\geq0\\i>u}}\limits E_{i,u}t^{-s}E_{u,i}t^s,P_j]\nonumber\\
&\quad+[\dfrac{\hbar}{2}\sum_{\substack{s\geq0\\u<i}}\limits E_{u,i}t^{-s-1}E_{i,u}t^{s+1},P_j]-[\dfrac{\hbar}{2}\sum_{\substack{s\geq0\\i<u}}\limits E_{i,u}t^{-s-1}E_{u,i}t^{s+1},P_j].\label{9112}
\end{align}
We compute the right hand  side of \eqref{9112}. By a direct computation, we obtain
\begin{align}
&\quad[\dfrac{\hbar}{2}\sum_{\substack{s\geq0\\u>i}}\limits E_{u,i}t^{-s}E_{i,u}t^s,P_j]\nonumber\\
&=\delta(j>i)\dfrac{\hbar^2}{2}\sum_{\substack{s,v\geq0}}\limits E_{j,i}t^{-s}E_{i,n+1}t^{s-v-1}E_{n+1,j}t^{v+1}\nonumber\\
&\quad+\dfrac{\hbar^2}{2}\sum_{\substack{s,v\geq0}}\limits E_{n+1,i}t^{-s}E_{j,n+1}t^{-v-1}E_{i,j}t^{s+v+1}-\dfrac{\hbar^2}{2}\sum_{\substack{s,v\geq0\\u>i}}\limits\delta_{i,j}E_{u,i}t^{-s}E_{j,n+1}t^{-v-1}E_{n+1,u}t^{s+v+1}\nonumber\\
&\quad+\dfrac{\hbar^2}{2}\sum_{\substack{s,v\geq0}}\limits\delta_{i,j}E_{u,n+1}t^{-s-v-1}E_{n+1,j}t^{v+1}E_{i,u}t^s-\dfrac{\hbar^2}{2}\sum_{\substack{s,v\geq0}}\limits E_{j,i}t^{-s-v-1}E_{n+1,j}t^{v+1}E_{i,n+1}t^s\nonumber\\
&\quad-\delta(j>i)\dfrac{\hbar^2}{2}\sum_{\substack{s,v\geq0}}\limits E_{j,n+1}t^{-v-1}E_{n+1,i}t^{v+1-s}E_{i,j}t^s,\label{551-1}\\
&\quad-[\dfrac{\hbar}{2}\sum_{\substack{s\geq0\\i>u}}\limits E_{i,u}t^{-s}E_{u,i}t^s,P_j]\nonumber\\
&=-\dfrac{\hbar^2}{2}\sum_{\substack{s,v\geq0}}\limits \delta_{i,j}E_{i,u}t^{-s}E_{u,n+1}t^{s-v-1}E_{n+1,j}t^{v+1}\nonumber\\
&\quad+\delta(i>j)\dfrac{\hbar^2}{2}\sum_{\substack{s,v\geq0}}\limits E_{i,j}t^{-s}E_{j,n+1}t^{-v-1}E_{n+1,i}t^{s+v+1}\nonumber\\
&\quad-\delta(i>j)\dfrac{\hbar^2}{2}\sum_{\substack{s,v\geq0}}\limits E_{i,n+1}t^{-s-v-1}E_{n+1,j}t^{v+1}E_{j,i}t^s\nonumber\\
&\quad+\dfrac{\hbar^2}{2}\sum_{\substack{s,v\geq0}}\limits\delta_{i,j}E_{j,n+1}t^{-v-1}E_{n+1,u}t^{v+1-s}E_{u,i}t^s,\label{551-2}\\
&\quad[\dfrac{\hbar}{2}\sum_{\substack{s\geq0\\u<i}}\limits E_{u,i}t^{-s-1}E_{i,u}t^{s+1},P_j]\nonumber\\
&=\delta(j<i)\dfrac{\hbar^2}{2}\sum_{\substack{s,v\geq0}}\limits E_{j,i}t^{-s-1}E_{i,n+1}t^{s-v}E_{n+1,j}t^{v+1}\nonumber\\
&\quad+\dfrac{\hbar^2}{2}\sum_{\substack{s,v\geq0}}\limits \delta_{i,j}E_{u,i}t^{-s-1}E_{j,n+1}t^{-v-1}E_{n+1,u}t^{s+v+1}\nonumber\\
&\quad+\dfrac{\hbar^2}{2}\sum_{\substack{s,v\geq0\\u<i}}\limits \delta_{i,j}E_{u,n+1}t^{-s-v-2}E_{n+1,j}t^{v+1}E_{i,u}t^{s+1}\nonumber\\
&\quad-\delta(j<i)\dfrac{\hbar^2}{2}\sum_{\substack{s,v\geq0}}\limits E_{j,n+1}t^{-v-1}E_{n+1,i}t^{v-s}E_{i,j}t^{s+1},\label{551-3}\\
&\quad-[\dfrac{\hbar}{2}\sum_{\substack{s\geq0\\i<u}}\limits E_{i,u}t^{-s-1}E_{u,i}t^{s+1},P_j]\nonumber\\
&=-\dfrac{\hbar^2}{2}\sum_{\substack{s,v\geq0}}\limits\delta_{i,j}E_{i,u}t^{-s-1}E_{u,n+1}t^{s-v}E_{n+1,j}t^{v+1}+\dfrac{\hbar^2}{2}\sum_{\substack{s,v\geq0}}\limits E_{i,n+1}t^{-s}E_{j,i}t^{s-v-1}E_{n+1,j}t^{v+1}\nonumber\\
&\quad+\delta(i<j)\dfrac{\hbar^2}{2}\sum_{\substack{s,v\geq0}}\limits E_{i,j}t^{-s-1}E_{j,n+1}t^{-v-1}E_{n+1,i}t^{s+v+2}\nonumber\\
&\quad-\delta(i<j)\dfrac{\hbar^2}{2}\sum_{\substack{s\geq0}}\limits E_{i,n+1}t^{-s-v-2}E_{n+1,j}t^{v+1}E_{j,i}t^{s+1}\nonumber\\
&\quad-\dfrac{\hbar^2}{2}\sum_{\substack{s,v\geq0}}\limits E_{j,n+1}t^{-v-1}E_{i,j}t^{v-s}E_{n+1,i}t^{s+1}+\dfrac{\hbar^2}{2}\sum_{\substack{s,v\geq0}}\limits\delta_{i,j}E_{j,n+1}t^{-v-1}E_{n+1,u}t^{v-s}E_{u,i}t^{s+1}.\label{551-4}
\end{align}
Here after, we denote $(\text{equation number})_{a,b}$ means that the value of $(\text{equation number})$ at $i=a,j=b$. Moreover, we denote the $r$-th term of the right hand side of $(\text{equation number})$ by $(\text{equation number})_r$.

By the definition of $A_i$, we have
\begin{align}
[A_i,P_j]-[A_j,P_i]&=\eqref{551-1}_{i,j}+\eqref{551-2}_{i,j}+\eqref{551-3}_{i,j}+\eqref{551-4}_{i,j}\nonumber\\
&\quad-\eqref{551-1}_{j,i}-\eqref{551-2}_{j,i}-\eqref{551-3}_{j,i}-\eqref{551-4}_{j,i}.\label{9113}
\end{align}
By the definition, we find that the terms containing $\delta_{i,j}$ in the right hand side of \eqref{9113} vanish. By a direct computation, we can compute the sum of the terms containing $\delta(j>i)$ in the right hand side of \eqref{9113}:
\begin{align}
&\quad\eqref{551-1}_{i,j,1}+\eqref{551-1}_{i,j,6}-\eqref{551-2}_{j,i,2}-\eqref{551-2}_{j,i,,3}\nonumber\\
&\qquad\qquad-\eqref{551-3}_{j,i,1}-\eqref{551-3}_{j,i,4}+\eqref{551-4}_{i,j,3}+\eqref{551-4}_{i,j,4}\nonumber\\
&=\delta(j>i)\dfrac{\hbar^2}{2}\sum_{\substack{s,v\geq0}}\limits E_{j,i}t^{-s-v-1}E_{i,n+1}t^{s}E_{n+1,j}t^{v+1}\nonumber\\
&\quad-\delta(j>i)\dfrac{\hbar^2}{2}\sum_{\substack{s,v\geq0}}\limits E_{j,n+1}t^{-v-1}E_{n+1,i}t^{-s}E_{i,j}t^{s+v+1}\nonumber\\
&\quad-\delta(i<j)\dfrac{\hbar^2}{2}\sum_{\substack{s,v\geq0}}\limits E_{i,j}t^{-s-v-1}E_{j,n+1}t^{s}E_{n+1,i}t^{v+1}\nonumber\\
&\quad+\delta(i<j)\dfrac{\hbar^2}{2}\sum_{\substack{s,v\geq0}}\limits E_{i,n+1}t^{-v-1}E_{n+1,j}t^{-s}E_{j,i}t^{s+v+1}.\label{551-5}
\end{align}
Similarly, we can compute the sum of the terms containing $\delta(j<i)$ in the right hand side of \eqref{9113}:
\begin{align}
&-\eqref{551-1}_{j,i,1}-\eqref{551-1}_{j,i,6}+\eqref{551-2}_{i,j,2}+\eqref{551-2}_{i,j,3}\nonumber\\
&\qquad\qquad+\eqref{551-3}_{i,j,1}+\eqref{551-3}_{i,j,4}-\eqref{551-4}_{j,i3}-\eqref{551-4}_{j,i,4}\nonumber\\
&=-\delta(i>j)\dfrac{\hbar^2}{2}\sum_{\substack{s,v\geq0}}\limits E_{i,j}t^{-s-v-1}E_{j,n+1}t^{s}E_{n+1,i}t^{v+1}\nonumber\\
&\quad+\delta(i<j)\dfrac{\hbar^2}{2}\sum_{\substack{s,v\geq0}}\limits E_{i,n+1}t^{-v-1}E_{n+1,j}t^{-s}E_{j,i}t^{s+v+1}\nonumber\\
&\quad+\delta(j<i)\dfrac{\hbar^2}{2}\sum_{\substack{s,v\geq0}}\limits E_{j,i}t^{-s-v-1}E_{i,n+1}t^{s}E_{n+1,j}t^{v+1}\nonumber\\
&\quad-\delta(j<i)\dfrac{\hbar^2}{2}\sum_{\substack{s,v\geq0}}\limits E_{j,n+1}t^{-v-1}E_{n+1,i}t^{-s}E_{i,j}t^{s+v+1}.\label{551-7}
\end{align}
By a direct computation, we obtain
\begin{align*}
&\quad\eqref{551-5}_2+\eqref{551-7}_4+\eqref{551-1}_{i,j,2}\\
&=\delta(i\neq j)\dfrac{\hbar^2}{2}\sum_{\substack{s,v\geq0}}\limits [E_{n+1,i}t^{-s},E_{j,n+1}t^{-v-1}]E_{i,j}t^{s+v+1}=0.
\end{align*}
Similarly, we have
\begin{align*}
\eqref{551-5}_3+\eqref{551-7}_1-\eqref{551-1}_{j,i,5}&=0.
\end{align*}
Then, we find that $[A_i,P_j]-[A_j,P_i]+[P_i,P_j]$ is equal to the sum of the following four terms:
\begin{gather*}
\eqref{551-5}_1+\eqref{551-7}_3+\eqref{551-1}_{i,j,5},\\
\eqref{551-5}_4+\eqref{551-7}_2-\eqref{551-1}_{j,i,2},\\
\eqref{551-0}_1-\eqref{551-4}_{j,i,2}+\eqref{551-4}_{i,j,5},\\
\eqref{551-0}_2+\eqref{551-4}_{i,j,2}-\eqref{551-4}_{j,i,5}.
\end{gather*}
By a direct computation, these four sums are equal to zero.
We complete the proof of the compatibility with \eqref{Eq2.1}.
\section{The rectangular $W$-algebra $\mathcal{W}^k(\mathfrak{gl}(2n),(2^n))$}
Let us set some notations of a vertex algebra. For a vertex algebra $V$, we denote the generating field associated with $v\in V$ by $v(z)=\displaystyle\sum_{n\in\mathbb{Z}}\limits v_{(n)}z^{-n-1}$. We also denote the OPE of $V$ by
\begin{equation*}
u(z)v(w)\sim\displaystyle\sum_{s\geq0}\limits \dfrac{(u_{(s)}v)(w)}{(z-w)^{s+1}}
\end{equation*}
for all $u, v\in V$. We denote the vacuum vector (resp.\ the translation operator) by $|0\rangle$ (resp.\ $\partial$).

We denote the universal affine vertex algebra associated with a finite dimensional Lie algebra $\mathfrak{g}$ and its inner product $\kappa$ by $V^\kappa(\mathfrak{g})$. By the PBW theorem, we can identify $V^\kappa(\mathfrak{g})$ with $U(t^{-1}\mathfrak{g}[t^{-1}])$. In order to simplify the notation, here after, we denote the generating field $(ut^{-1})(z)$ as $u(z)$. By the definition of $V^\kappa(\mathfrak{g})$, the generating fields $u(z)$ and $v(z)$ satisfy the OPE
\begin{gather}
u(z)v(w)\sim\dfrac{[u,v](w)}{z-w}+\dfrac{\kappa(u,v)}{(z-w)^2}\label{OPE1}
\end{gather}
for all $u,v\in\mathfrak{g}$. For a matrix unit $e_{i,j}$, we denote $e_{i,j}t^{-m}\in U(t^{-1}\mathfrak{g}[t^{-1}])=V^\kappa(\mathfrak{g})$ by $e_{i,j}[-m]$.

The $W$-algebra $\mathcal{W}^k(\mathfrak{g},f)$ is a vertex algebra associated with the reductive Lie algebra $\mathfrak{g}$ and a nilpotent element $f$. We call the $W$-algebra associated with $\mathfrak{gl}(ln)$ and a nilpotent element of type $(l^n)$ the rectangular $W$-algebra and denote it by $\mathcal{W}^k(\mathfrak{gl}(ln),(l^n))$.
In this article, we only consider the case that $l=2$. The nilpotent element is
\begin{equation*}
f=\sum_{u=1}^n\limits e_{n+u,u}.
\end{equation*}
By Theorem 3.1 and Corollary 3.2 in \cite{AM}, we obtain the following theorem.
\begin{Theorem}[Theorem 3.1 and Corollary 3.2 in \cite{AM}]
\begin{enumerate}
\item We define the inner product on $\mathfrak{gl}(n)$ by
\begin{equation*}
\kappa(e_{i,j},e_{p,q})=\delta_{j,p}\delta_{i,q}\alpha+\delta_{i,j}\delta_{p,q},
\end{equation*}
where $\alpha=k+n$.
Then, the rectangular $W$-algebra $\mathcal{W}^k(\mathfrak{gl}(2n),(2^n))$ can be realized as a vertex subalgebra of $V^{\kappa}(\mathfrak{gl}(n))^{\otimes 2}$.
\item The $W$-algebra $\mathcal{W}^k(\mathfrak{g},f)$ has the following strong generators:
\begin{align*}
W^{(1)}_{i,j}&=e^{(1)}_{i,j}[-1]+e^{(2)}_{i,j}[-1],\\
W^{(2)}_{i,j}&=\sum_{1\leq u\leq n}\limits e^{(1)}_{u,j}[-1]e^{(2)}_{i,u}[-1]-\alpha e^{(1)}_{i,j}[-1]
\end{align*} 
for $1\leq i,j\leq n$, where $e^{(1)}_{i,j}[-1]=e_{i,j}[-1]\otimes 1\in V^{\kappa}(\mathfrak{gl}(n))^{\otimes 2}$ and $e^{(2)}_{i,j}[-1]=1\otimes e_{i,j}[-1]\in V^{\kappa}(\mathfrak{gl}(n))^{\otimes 2}$.
\end{enumerate}
\end{Theorem}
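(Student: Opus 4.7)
The statement is Arakawa--Molev's realization of the rectangular $W$-algebra, so my plan is to sketch a Drinfeld--Sokolov/Miura-style proof. Start from the BRST definition
\begin{equation*}
\mathcal{W}^k(\mathfrak{gl}(2n),f)=H^0\bigl(C(V^k(\mathfrak{gl}(2n)),f)\bigr),
\end{equation*}
where $C(\cdot,f)$ is the Kac--Roan--Wakimoto quantum Hamiltonian reduction complex associated with the $\mathfrak{sl}_2$-triple containing $f=\sum_{u=1}^{n}e_{n+u,u}$. Because the nilpotent is rectangular of type $(2^n)$, the Dynkin grading on $\mathfrak{gl}(2n)$ is even: $\mathfrak{gl}(2n)=\mathfrak{g}_{-1}\oplus\mathfrak{g}_{0}\oplus\mathfrak{g}_{1}$, with $\mathfrak{g}_{0}\cong\mathfrak{gl}(n)\oplus\mathfrak{gl}(n)$ the block-diagonal subalgebra, and $\mathfrak{g}_{\pm 1}$ the strictly off-diagonal $n\times n$ blocks.

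First I would invoke the Feigin--Frenkel/Kac--Wakimoto vanishing theorem to reduce the reduction to $H^{0}$, and combine it with the Miura map, which realizes $\mathcal{W}^k$ as the joint kernel of a family of screening operators inside $V^{\kappa'}(\mathfrak{g}_{0})$ for a suitably shifted form $\kappa'$. A short bilinear-form calculation---the invariant form of $\mathfrak{gl}(2n)$ restricted to $\mathfrak{g}_{0}$, corrected by the standard shift coming from the trace of $\ad$ on $\mathfrak{g}_{1}$---reproduces the inner product in the theorem: the $\alpha=k+n$ summand is the shifted level, and the $\delta_{i,j}\delta_{p,q}$ summand is the trace of $e_{i,j}e_{p,q}$ picked up through $\mathfrak{g}_{0}\hookrightarrow\mathfrak{gl}(2n)$. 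This proves part (1).

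For part (2), I would apply the Kac--Roan--Wakimoto existence theorem: $\mathcal{W}^k(\mathfrak{g},f)$ admits a unique homogeneous strong generator of conformal weight $1+j_a$ for each basis vector $x_a$ of $\mathfrak{g}^f=\ker(\ad f)$ of $\ad(h)$-eigenvalue $-2j_a$. Here $\mathfrak{g}^f$ is spanned by $\{e_{i,j}+e_{n+i,n+j}\}_{1\le i,j\le n}$ at weight $1$ and by $\{e_{n+i,j}\}_{1\le i,j\le n}$ at weight $2$, giving exactly the two families $W^{(1)}_{i,j}$ and $W^{(2)}_{i,j}$. Uniqueness together with $\mathfrak{g}_{0}$-equivariance forces $W^{(1)}_{i,j}=e^{(1)}_{i,j}[-1]+e^{(2)}_{i,j}[-1]$. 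For the weight-$2$ generator I would write the most general $\mathfrak{gl}(n)$-equivariant bilinear lift of $e_{n+i,j}$ and demand annihilation by the screening differential; a Wick-contraction computation using \eqref{OPE1} pins the free linear coefficient to $-\alpha$, producing the displayed formula for $W^{(2)}_{i,j}$.

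The main obstacle is this last cocycle verification: the correction $-\alpha\, e^{(1)}_{i,j}[-1]$ has to cancel exactly the normal-ordered double-contraction contributed by the central term of \eqref{OPE1}, so the combinatorics requires care. Once this is in hand, a character-matching argument---Arakawa's PBW filtration identifies $\on{gr}\mathcal{W}^k$ with $S\bigl(\mathfrak{g}^f\otimes t^{-1}\mathbb{C}[t^{-1}]\bigr)$, whose graded character equals that of the vertex subalgebra of $V^{\kappa}(\mathfrak{gl}(n))^{\otimes 2}$ freely strongly generated by $\{W^{(1)}_{i,j},W^{(2)}_{i,j}\}$---shows no further generators are needed, and injectivity of the Miura map upgrades the generating map into an isomorphism, completing part (2).
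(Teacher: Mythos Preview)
The paper does not supply its own proof of this theorem: it is stated as a quotation of Theorem~3.1 and Corollary~3.2 of \cite{AM} and is used as a black box thereafter. So there is no in-paper argument to compare your proposal against.

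That said, your outline is the correct strategy and is essentially the one carried out in the cited reference: for a rectangular nilpotent the Dynkin grading is even, the Miura map lands in the affine vertex algebra of $\mathfrak{g}_0\cong\mathfrak{gl}(n)\oplus\mathfrak{gl}(n)$ at a level shifted by half the supertrace of $\ad$ on $\mathfrak{g}_{>0}$, and the Kac--Roan--Wakimoto structure theorem for $\mathfrak{g}^f$ gives exactly two $n^2$-dimensional families of strong generators of conformal weights $1$ and $2$. The explicit formula for $W^{(2)}_{i,j}$ is then fixed by equivariance and annihilation by the single screening operator coming from $\mathfrak{g}_1$. One small caution: you describe the weight-$2$ ansatz as ``the most general $\mathfrak{gl}(n)$-equivariant bilinear lift'', but equivariance alone leaves two independent quadratic invariants (the one displayed and $e^{(2)}_{u,j}[-1]e^{(1)}_{i,u}[-1]$), so you also need the normalization condition that the leading term of the Miura image match the chosen basis of $\mathfrak{g}^f$; this is what singles out the particular combination in the statement before the screening condition fixes the linear $-\alpha$ term. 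Apart from that, your plan is sound; just be aware that in the present paper nothing beyond the citation is expected.
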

\begin{Remark}
We note that $W^{(2)}_{i,j}$ in this article is different from the one in \cite{AM}. We shift $W^{(2)}_{i,j}$ in this article is corresponding to $W^{(2)}_{j,i}-\alpha\partial W^{(1)}_{j,i}$ in \cite{AM}.
\end{Remark}
We can compute all OPEs of these strong generators. The computation can be done by using the computation process in the appendix of \cite{U5}.
\begin{Theorem}\label{Tho1}
\begin{enumerate}
\item The following equations hold:
\begin{gather*}
(W^{(1)}_{p,q})_{(0)}W^{(1)}_{i,j}=\delta_{q,i}W^{(1)}_{p,j}-\delta_{p,j}W^{(1)}_{i,q},\\
(W^{(1)}_{p,q})_{(1)}W^{(1)}_{i,j}=2\delta_{q,i}\delta_{p,j}\alpha|0\rangle+\delta_{p,q}\delta_{i,j}(1+1)|0\rangle,\\
(W^{(1)}_{p,q})_{(s)}W^{(1)}_{i,j}=0\text{ for all }s>1.
\end{gather*}
\item The following four equations hold:
\begin{align*}
(W^{(1)}_{p,q})_{(0)}W^{(2)}_{i,j}
&=-\delta_{p,j}W^{(2)}_{i,q}+\delta_{i,q}W^{(2)}_{p,j},\\
(W^{(1)}_{p,q})_{(1)}W^{(2)}_{i,j}&=\delta_{p,j}\alpha W^{(1)}_{i,q}+\delta_{p,q}W^{(1)}_{i,j},\\
(W^{(1)}_{p,q})_{(2)}W^{(2)}_{i,j}
&=-2\delta_{q,i}\delta_{p,j}\alpha^2|0\rangle-2\delta_{p,q}\delta_{i,j}\alpha|0\rangle,\\
(W^{(1)}_{p,q})_{(s)}W^{(2)}_{i,j}&=0\text{ for all }s>2.
\end{align*}
\item The following relations hold:
\begin{align}
&\quad (W^{(2)}_{p,q})_{(0)}W^{(2)}_{i,j}\nonumber\\
&=(W^{(2)}_{p,j})_{(-1)}W^{(1)}_{i,q}-(W^{(1)}_{p,j})_{(-1)}W^{(2)}_{i,q}+\alpha(\partial W^{(1)}_{p,j})_{(-1)}W^{(1)}_{i,q}+(\partial W^{(1)}_{p,q})_{(-1)}W^{(1)}_{i,j}\nonumber\\
&\quad-\delta_{q,i}\alpha\partial W^{(2)}_{p,j}-\delta_{i,q}\dfrac{2\alpha^2+1}{2}\partial^2W^{(1)}_{p,j}-\delta_{i,j}\partial W^{(2)}_{p,q}-\dfrac{3}{2}\delta_{i,j}\alpha\partial^2W^{(1)}_{p,q},\label{OPE3-1}\\
&\quad (W^{(2)}_{p,q})_{(1)}W^{(2)}_{i,j}\nonumber\\
&=\alpha(W^{(1)}_{p,j})_{(-1)}W^{(1)}_{i,q}+(W^{(1)}_{p,q})_{(-1)}W^{(1)}_{i,j}\nonumber\\
&\quad-\delta_{q,i}\alpha W^{(2)}_{p,j}-2\delta_{q,i}\alpha^2\partial W^{(1)}_{p,j}-\delta_{p,j}\alpha W^{(2)}_{i,q}-\delta_{i,j}(1)W^{(2)}_{p,q}-2\delta_{i,j}\alpha\partial W^{(1)}_{p,q}-\delta_{p,q}W^{(2)}_{i,j},\label{OPE3-2}\\
&\quad(W^{(2)}_{p,q})_{(2)}W^{(2)}_{i,j}\nonumber\\
&=\delta_{p,j}\alpha(2\alpha-1)W^{(1)}_{i,q}-\delta_{i,j}\alpha W^{(1)}_{p,q}-\delta_{i,q}\alpha(2\alpha-1)W^{(1)}_{i,q}+\delta_{p,q}\alpha W^{(1)}_{i,j},\label{OPE3-3}\\
&\quad(W^{(2)}_{p,q})_{(3)}W^{(2)}_{i,j}\nonumber\\
&=(1+\alpha^2-6\alpha^2)\delta_{p,q}\delta_{i,j}|0\rangle+(2\alpha-6\alpha^3)\delta_{p,j}\delta_{i,q}|0\rangle,\label{OPE3-4}\\
&\quad(W^{(2)}_{p,q})_{(s)}W^{(2)}_{i,j}=0\text{ for all }s>0.
\end{align}
\end{enumerate}
\end{Theorem}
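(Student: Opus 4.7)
The plan is a direct OPE computation inside the affine vertex algebra $V^{\kappa}(\mathfrak{gl}(n))^{\otimes 2}$, using the basic current-current OPE \eqref{OPE1} together with the non-commutative Wick formula for normally ordered products. The key structural observation is that fields supported in distinct tensor factors, such as $e^{(1)}_{i,j}(z)$ and $e^{(2)}_{p,q}(w)$, have trivial OPE, so only contractions within a single tensor factor contribute. This reduces every calculation to bookkeeping of a finite number of Wick contractions whose rule is given by \eqref{OPE1} with the stated $\kappa$.

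Part (1) is immediate: expanding $W^{(1)}_{p,q}(z)W^{(1)}_{i,j}(w)$ as a sum of four two-current OPEs kills the two mixed-factor terms and leaves two copies of \eqref{OPE1}, which assemble into the claimed simple and double poles using $[e_{p,q},e_{i,j}] = \delta_{q,i}e_{p,j} - \delta_{p,j}e_{i,q}$ and the given $\kappa$; the doubling of $\alpha$ and $\delta_{p,q}\delta_{i,j}$ in the central term reflects the two identical tensor-factor contributions. For part (2), I apply the non-commutative Wick formula to $W^{(1)}_{p,q}(z) \cdot \sum_u {:}e^{(1)}_{u,j} e^{(2)}_{i,u}{:}(w)$ and subtract $\alpha\, W^{(1)}_{p,q}(z)\, e^{(1)}_{i,j}(w)$. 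Single contractions combine through the derivation rule into the $\mathfrak{gl}(n)$-covariance statement $\delta_{i,q}W^{(2)}_{p,j} - \delta_{p,j}W^{(2)}_{i,q}$ at the $_{(0)}$-level, while the $_{(1)}$ and $_{(2)}$ coefficients arise from double contractions of $e^{(1)}_{p,q}$ or $e^{(2)}_{p,q}$ against the currents of the normally ordered product and against the $-\alpha e^{(1)}_{i,j}$ correction, giving the stated $\alpha$, $\delta_{p,q}$ and $\alpha^2$ factors.

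Part (3) is the substantive calculation. I compute ${:}e^{(1)}_{v,q} e^{(2)}_{p,v}{:}(z) \cdot {:}e^{(1)}_{u,j} e^{(2)}_{i,u}{:}(w)$ by two successive applications of the non-commutative Wick formula, separating contractions in the first tensor factor (between $e^{(1)}_{v,q}$ and $e^{(1)}_{u,j}$) from those in the second (between $e^{(2)}_{p,v}$ and $e^{(2)}_{i,u}$). Single contractions yield mixed normally ordered triple products which, after summation over $u,v$ and the identity $\sum_u e_{\star,u} e_{u,\star}$, collapse into the $(W^{(2)}_{p,j})_{(-1)}W^{(1)}_{i,q}$ and $(W^{(1)}_{p,j})_{(-1)}W^{(1)}_{i,q}$ structures on the RHS of \eqref{OPE3-1} and \eqref{OPE3-2}. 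Double contractions produce the central terms and the linear-in-$W^{(1)}$ pieces of \eqref{OPE3-3}--\eqref{OPE3-4}, with the cubic coefficients in $\alpha$ coming from the $\alpha$-part of $\kappa$ being used in both pole orders. The $-\alpha e^{(1)}$ piece of $W^{(2)}$ accounts for the linear-in-$\alpha$ contributions, such as $\alpha(2\alpha-1)$ in \eqref{OPE3-3}. Truncation at the cubic pole reflects $W^{(2)}$ having conformal weight $2$.

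The main obstacle is the careful conversion of contractions evaluated at $z$ into fields at $w$ via the Taylor expansion $e^{(a)}_{i,j}(z) = \sum_{m\geq 0} \tfrac{(z-w)^m}{m!}\partial^m e^{(a)}_{i,j}(w)$; this is what generates the $\partial W^{(2)}$ and $\partial^2 W^{(1)}$ terms with coefficients $\alpha$, $\tfrac{2\alpha^2+1}{2}$ and $\tfrac{3}{2}\alpha$ in \eqref{OPE3-1}. Correctly tracking these derivative corrections against the pole structure, while simultaneously collecting the remaining normally ordered terms into recognizable generators of the $W$-algebra, is the bulk of the work. I will follow the systematic computation scheme from the appendix of \cite{U5} to organize these expansions uniformly across all four OPEs in part (3).
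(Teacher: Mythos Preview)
Your proposal is correct and follows essentially the same approach as the paper: the paper itself gives no detailed proof, stating only that the OPEs are obtained by direct computation inside $V^{\kappa}(\mathfrak{gl}(n))^{\otimes 2}$ using the computation process in the appendix of \cite{U5}, which is precisely the Wick-formula bookkeeping you outline. Your description of how the various terms arise (single vs.\ double contractions, Taylor expansion at $w$ producing the $\partial$-corrections, separation of the two tensor factors) is the expected mechanism and matches the intended argument.
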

We note that these OPEs are only dependent on $\alpha$ and independent of $n$.
Thus, by Theorem~\ref{Tho1}, we find the following embedding:
\begin{equation*}
\mathcal{W}^{k+1}(\mathfrak{gl}(2n),(2^n))\to \mathcal{W}^{k}(\mathfrak{gl}(2(n+1)),(2^{n+1})),\ W^{(u)}_{i,j}\mapsto W^{(u)}_{i,j}.
\end{equation*}
\section{The relationship between homomorphism $\Psi$ and the rectangular $W$-algebra $\mathcal{W}^k(\mathfrak{gl}(2n),(2^n))$}
Let us recall the definition of a universal enveloping algebra of a vertex algebra in the sense of \cite{FZ} and \cite{MNT}.
For any vertex algebra $V$, let $L(V)$ be the Borchards Lie algebra, that is,
\begin{align}
 L(V)=V{\otimes}\mathbb{C}[t,t^{-1}]/\text{Im}(\partial\otimes\id +\id\otimes\frac{d}{d t})\label{844},
\end{align}
where the commutation relation is given by
\begin{align*}
 [ut^a,vt^b]=\sum_{r\geq 0}\begin{pmatrix} a\\r\end{pmatrix}(u_{(r)}v)t^{a+b-r}
\end{align*}
for all $u,v\in V$ and $a,b\in \mathbb{Z}$. Now, we define the universal enveloping algebra of $V$.
\begin{Definition}[Section~6 in \cite{MNT}]\label{Defi}
We set $\mathcal{U}(V)$ as the quotient algebra of the standard degreewise completion of the universal enveloping algebra of $L(V)$ by the completion of the two-sided ideal generated by
\begin{gather}
(u_{(a)}v)t^b-\sum_{i\geq 0}
\begin{pmatrix}
 a\\i
\end{pmatrix}
(-1)^i(ut^{a-i}vt^{b+i}-(-1)^avt^{a+b-i}ut^{i}),\label{241}\\
|0\rangle t^{-1}-1.\label{242}
\end{gather}
We call $\mathcal{U}(V)$ the universal enveloping algebra of $V$.
\end{Definition}
In \cite{U4} Theorem~5.1, the author constructed a surjective homomorphism from the affine super Yangian to the universal enveloping algebra of a rectangular $W$-superalgebra. Setting $m=n$, $n=0$ and $l=2$, we obtain the following theorem.
\begin{Theorem}\label{Maim}
Suppose that $\hbar=-1$ and $\ve=-\alpha$.
There exists an algebra homomorphism 
\begin{equation*}
\Phi^{n}\colon Y_{\hbar,\ve}(\widehat{\mathfrak{sl}}(n))\to \mathcal{U}(\mathcal{W}^{k}(\mathfrak{gl}(2n),(2^{n})))
\end{equation*} 
determined by
\begin{gather*}
\Phi^n(H_{i,0})=\begin{cases}
W^{(1)}_{n,n}-W^{(1)}_{1,1}+2\alpha&\text{ if }i=0,\\
W^{(1)}_{i,i}-W^{(1)}_{i+1,i+1}&\text{ if }i\neq 0,
\end{cases}\\
\Phi^n(X^+_{i,0})=\begin{cases}
W^{(1)}_{n,1}t&\text{ if }i=0,\\
W^{(1)}_{i,i+1}&\text{ if }i\neq 0,
\end{cases}
\quad \Phi^n(X^-_{i,0})=\begin{cases}
W^{(1)}_{1,n}t^{-1}&\text{ if }i=0,\\
W^{(1)}_{i+1,i}&\text{ if }i\neq 0,
\end{cases}
\end{gather*}
\begin{align*}
\Phi^n(H_{i,1})&=\begin{cases}
W^{(2)}_{n,n}t-W^{(2)}_{1,1}t+\alpha W^{(1)}_{n,n}- 2\alpha\Phi^n(H_{0,0}) +W^{(1)}_{n,n} (W^{(1)}_{1,1}-2\alpha)\\
\quad-\displaystyle\sum_{s \geq 0} \limits\displaystyle\sum_{u=1}^{n}\limits W^{(1)}_{n,u}t^{-s} W^{(1)}_{u,n}t^s+\displaystyle\sum_{s \geq 0}\displaystyle\sum_{u=1}^{n}\limits W^{(1)}_{1,u}t^{-s-1} W^{(1)}_{u,1}t^{s+1},\\
\qquad\qquad\qquad\qquad\qquad\qquad\qquad\qquad\qquad\qquad\qquad\qquad\qquad\qquad \text{ if }i=0,\\
 W^{(2)}_{i,i}t- W^{(2)}_{i+1,i+1}t+\dfrac{i}{2}\Phi^n(H_{i,0})+W^{(1)}_{i,i}W^{(1)}_{i+1,i+1}\\
\quad-\displaystyle\sum_{s \geq 0}  \limits\displaystyle\sum_{u=1}^{i}\limits W^{(1)}_{i,u}t^{-s}W^{(1)}_{u,i}t^s-\displaystyle\sum_{s \geq 0} \limits\displaystyle\sum_{u=i+1}^{n}\limits  W^{(1)}_{i,u}t^{-s-1} W^{(1)}_{u,i}t^{s+1}\\
\quad+\displaystyle\sum_{s \geq 0}\limits\displaystyle\sum_{u=1}^{i}\limits W^{(1)}_{i+1,u}t^{-s} W^{(1)}_{u,i+1}t^s+\displaystyle\sum_{s \geq 0}\limits\displaystyle\sum_{u=i+1}^{n} \limits W^{(1)}_{i+1,u}t^{-s-1} W^{(1)}_{u,i+1}t^{s+1}\\
\qquad\qquad\qquad\qquad\qquad\qquad\qquad\qquad\qquad\qquad\qquad\qquad\qquad\qquad i\neq0,
\end{cases}
\end{align*}
\begin{align*}
\Phi^n(X^+_{i,1})&=\begin{cases}
W^{(2)}_{n,1}t^2+\alpha W^{(1)}_{n,1}t-
2\alpha \Phi^n(X_{0,0}^{+})-\displaystyle\sum_{s \geq 0} \limits\displaystyle\sum_{u=1}^{n}\limits W^{(1)}_{n,u}t^{-s} W^{(1)}_{u,1}t^{s+1}\\
\qquad\qquad\qquad\qquad\qquad\qquad\qquad\qquad\qquad\qquad\qquad\qquad\qquad\qquad \text{ if $i = 0$},\\
W^{(2)}_{i,i+1}t+\dfrac{i}{2}\Phi^n(X_{i,0}^{+})\\
\quad-\displaystyle\sum_{s \geq 0}\limits\displaystyle\sum_{u=1}^i\limits W^{(1)}_{i,u}t^{-s} W^{(1)}_{u,i+1}t^s-\displaystyle\sum_{s \geq 0}\limits\displaystyle\sum_{u=i+1}^{n}\limits W^{(1)}_{i,u}t^{-s-1} W^{(1)}_{u,i+1}t^{s+1}\\
\qquad\qquad\qquad\qquad\qquad\qquad\qquad\qquad\qquad\qquad\qquad\qquad\qquad\qquad \text{ if $i \neq 0$},
\end{cases}
\end{align*}
\begin{align*}
\Phi^n(X^-_{i,1})&=\begin{cases}
W^{(2)}_{1,n}-2\alpha\Phi^n(X_{0,0}^{-})-\displaystyle\sum_{s \geq 0} \limits\displaystyle\sum_{u=1}^{n}\limits  W^{(1)}_{1,u}t^{-s-1} W^{(1)}_{u,n}t^s,\\
\qquad\qquad\qquad\qquad\qquad\qquad\qquad\qquad\qquad\qquad\qquad\qquad\qquad\qquad \text{ if $i = 0$},\\
 W^{(2)}_{i+1,i}t+\dfrac{i}{2}\Phi^n(X_{i,0}^{-})\\
\quad-\displaystyle\sum_{s \geq 0}\limits\displaystyle\sum_{u=1}^i\limits W^{(1)}_{i+1,u}t^{-s} W^{(1)}_{u,i}t^s-\displaystyle\sum_{s \geq 0}\limits\displaystyle\sum_{u=i+1}^{n}\limits W^{(1)}_{i+1,u}t^{-s-1} W^{(1)}_{u,i}t^{s+1} \\
\qquad\qquad\qquad\qquad\qquad\qquad\qquad\qquad\qquad\qquad\qquad\qquad\qquad\qquad\text{ if $i \neq 0$}.
\end{cases}
\end{align*}
\end{Theorem}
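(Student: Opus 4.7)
The plan is to obtain Theorem~\ref{Maim} as a direct specialization of Theorem~5.1 of \cite{U4}, which constructs a surjective homomorphism from the affine super Yangian to the universal enveloping algebra of a rectangular $W$-superalgebra of type $A$. Taking the odd dimension to be zero, the even dimension equal to the present $n$, and $l=2$, one collapses the super structure to the ordinary one, and the rectangular nilpotent $(l^{m+n})$ of \cite{U4} becomes $(2^n)$. The first step is therefore to recall Theorem~5.1 of \cite{U4}, match its parameter conventions with the present $(\hbar,\ve,\alpha,k)$, and verify formula-by-formula that the general assignments specialize to the ones written in the statement above.

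For a self-contained verification one would instead check that $\Phi^n$ respects each defining relation of the minimalistic presentation in Proposition~\ref{Prop32}. Relations \eqref{Eq2.2}, \eqref{Eq2.4}, and \eqref{Eq2.10} involve only the degree-zero generators and follow immediately from Theorem~\ref{Tho1}(1), since the restriction of $\Phi^n$ to $U(\widehat{\mathfrak{sl}}(n))$ is the natural embedding $e_{i,j}t^m \mapsto W^{(1)}_{i,j}t^m$ (the $+2\alpha$ shift in $\Phi^n(H_{0,0})$ absorbs the diagonal central contribution of the OPE). Relation~\eqref{Eq2.3} is checked by computing $[\Phi^n(X^+_{i,1}),\Phi^n(X^-_{j,0})]$ using Definition~\ref{Defi} together with Theorem~\ref{Tho1}(2); the cubic tails $\sum_{s\geq 0}\sum_u W^{(1)}_{a,u}t^{-s}W^{(1)}_{u,b}t^{s+?}$ appearing in the stated formulas are engineered precisely so that their cross-terms cancel under this commutator, leaving $\delta_{ij}\Phi^n(H_{i,1})$.

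The non-trivial relations \eqref{Eq2.5}--\eqref{Eq2.9} are established in the same spirit, using the $(W^{(2)})_{(s)}W^{(2)}$ OPEs of \eqref{OPE3-1}--\eqref{OPE3-4}. These produce right-hand-side terms of the form $\alpha\,(W^{(1)})(W^{(1)})$, $\delta_{ij}W^{(2)}$, and central $\alpha$- and $\alpha^2$-contributions, which, after the substitution $\hbar=-1$ and $\ve=-\alpha$, exactly match the Yangian-side corrections $\pm\tfrac{\hbar}{2}\{X^\pm,X^\pm\}$, $\pm(\ve+\tfrac{n}{2}\hbar)[X^\pm,X^\pm]$, and the shift $\widetilde{H}_{i,1}=H_{i,1}-\tfrac{\hbar}{2}H_{i,0}^2$. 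Each case reduces to bookkeeping Kronecker deltas and tracking in which of the two tensor factors of $V^{\kappa}(\mathfrak{gl}(n))^{\otimes 2}$ the relevant mode sits; the $\ve$-correction on the Yangian side arises from the identification $\alpha=k+n$ together with the diagonal central term in Theorem~\ref{Tho1}(1).

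The main obstacle is compatibility with \eqref{Eq2.1} at level $(1,1)$, namely $[\Phi^n(\widetilde{H}_{i,1}),\Phi^n(\widetilde{H}_{j,1})]=0$. This requires massive cancellations among triple sums of mode products of the $W^{(1)}_{a,u}$'s, entirely parallel to the long calculation carried out for $\Psi$ in the subsection on compatibility with \eqref{Eq2.1} (culminating in the vanishing statement \eqref{concl}). Because the analogous cancellation analysis has already been performed in the more general super setting of \cite{U4}, I would ultimately invoke the specialization of that result rather than redo the calculation here.
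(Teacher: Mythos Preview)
Your proposal is correct and matches the paper's own treatment: the paper does not give an independent proof of Theorem~\ref{Maim} but simply introduces it as the specialization $m=n$, $n=0$, $l=2$ of Theorem~5.1 in \cite{U4}, exactly as in your first paragraph. The self-contained verification you sketch in the remaining paragraphs is extra (and reasonable in outline), but the paper itself relies entirely on the citation.
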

By the definition of $\Psi^n$, we obtain the following theorem.
\begin{Theorem}
Suppose that $\hbar=-1$ and $\ve=-k-(n+1)$. We obtain the following commutative diagram:
\begin{equation*}
\Phi^{n+1}\circ\Psi=\iota\circ\Phi^n.
\end{equation*}
\end{Theorem}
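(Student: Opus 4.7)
The plan is to use the minimalistic presentation (Proposition~\ref{Prop32}): since both $\Phi^{n+1}\circ\Psi$ and $\iota\circ\Phi^n$ are algebra homomorphisms, it suffices to check equality on the generators $H_{i,r}$ and $X^\pm_{i,r}$ for $0\leq i\leq n-1$ and $r\in\{0,1\}$. As a preliminary observation, the restriction of $\Phi^{n+1}$ to the image of $U(\widehat{\mathfrak{sl}}(n+1))$ sends the matrix unit loop element $E_{i,j}t^m$ (with $i\neq j$) to $W^{(1)}_{i,j}t^m$: indeed, this holds by definition on the Chevalley generators $E_{i,i+1}$ and $E_{n+1,1}t$, and propagates by induction because Theorem~\ref{Tho1}(1) shows that the zeroth products of the $W^{(1)}_{i,j}$ reproduce the matrix-unit bracket relations via the Borchards Lie bracket of Definition~\ref{Defi}.

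For the level-$0$ generators the verification is immediate. When $i\neq 0$, both composites agree with $W^{(1)}_{i,i+1}$ (resp.\ $W^{(1)}_{i+1,i}$) by definition. When $i=0$, the relation $[X^+_{n,0},X^+_{0,0}]=E_{n,1}t$ together with the preliminary observation gives $\Phi^{n+1}\circ\Psi(X^+_{0,0})=W^{(1)}_{n,1}t=\iota\circ\Phi^n(X^+_{0,0})$, and the Cartan case $\Psi(H_{0,0})=H_{0,0}+H_{n,0}$ telescopes in the same manner.

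For $H_{i,1}$ and $X^\pm_{i,1}$ with $i\neq 0$, the comparison reduces to a direct cancellation. The formula in Theorem~\ref{Maim} shows that $\Phi^{n+1}(H_{i,1})$ differs from $\iota\circ\Phi^n(H_{i,1})$ precisely by the $u=n+1$ contributions in the defining sums, namely $-\sum_{s\geq0}W^{(1)}_{i,n+1}t^{-s-1}W^{(1)}_{n+1,i}t^{s+1}+\sum_{s\geq0}W^{(1)}_{i+1,n+1}t^{-s-1}W^{(1)}_{n+1,i+1}t^{s+1}$. On the other hand, $\Psi(H_{i,1})-H_{i,1}$ is the sum of the correction terms $-\hbar\sum E_{i,n+1}t^{-s-1}E_{n+1,i}t^{s+1}+\hbar\sum E_{i+1,n+1}t^{-s-1}E_{n+1,i+1}t^{s+1}$, which $\Phi^{n+1}$ maps to $-\hbar$ times the same $W^{(1)}$-bilinears. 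With $\hbar=-1$ the two contributions cancel, and an analogous telescoping handles $X^\pm_{i,1}$.

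The main obstacle will be the $i=0$ level-$1$ case. Here we invoke Corollary~\ref{Cor} to rewrite $\Psi(X^\pm_{0,1})$ and $\Psi(H_{0,1})$ as commutators with $X^\pm_{n,0}$ plus explicit correction terms, and evaluate $\Phi^{n+1}$ via the OPEs in Theorem~\ref{Tho1}. Concretely $\Phi^{n+1}\circ\Psi(X^+_{0,1})=[W^{(1)}_{n,n+1},\Phi^{n+1}(X^+_{0,1})]-\hbar\sum_{s\geq0}W^{(1)}_{n,n+1}t^{-s}W^{(1)}_{n+1,1}t^{s+1}$, and the inner commutator expands via Theorem~\ref{Tho1}(1)--(2) into $W^{(2)}_{n,1}t^2$, a scalar multiple of $W^{(1)}_{n,1}t$, and bilinear sums whose $u=n+1$ contributions must match the outer correction term. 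For $H_{0,1}$ one additionally tracks the Cartan product $\hbar H_{n,0}H_{0,0}$ and verifies that the shift $(\ve+\frac{\hbar}{2}n)H_{n,0}$ combines with $\alpha W^{(1)}_{n+1,n+1}-2\alpha\Phi^{n+1}(H_{0,0})$ to reproduce the corresponding terms of $\iota\circ\Phi^n(H_{0,1})$; this matching works out precisely because the parameter identifications $\hbar=-1$ and $\ve=-k-(n+1)$ enforce the common value $\alpha=k+n+1$ on both sides of the diagram.
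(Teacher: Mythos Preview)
Your treatment of the level-$0$ generators and of $H_{i,1}$, $X^\pm_{i,1}$ for $i\neq 0$ is correct and is exactly what the paper does: the $u=n+1$ summands of $\Phi^{n+1}$ are cancelled, under $\hbar=-1$, by the image of the correction terms in $\Psi$. The paper's proof, however, stops there. It observes that the defining relations \eqref{Eq2.1}--\eqref{Eq2.10} allow one to express $X^+_{0,1}$, $X^-_{0,1}$, $H_{0,1}$ (and in fact all $H_{i,r}$, $X^-_{j,1}$) in terms of $\{X^\pm_{i,0}:0\leq i\leq n-1\}\cup\{X^+_{j,1}:1\leq j\leq n-1\}$; for instance $X^+_{0,1}=-[\widetilde H_{1,1},X^+_{0,0}]$ by \eqref{Eq2.5}. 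Since both $\Phi^{n+1}\circ\Psi$ and $\iota\circ\Phi^n$ are algebra homomorphisms, agreement on this smaller generating set already forces agreement everywhere.

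Your final paragraph is therefore unnecessary, and as written it is only a plan: you name the ingredients (Corollary~\ref{Cor}, the OPEs of Theorem~\ref{Tho1}) but do not carry out the commutator $[W^{(1)}_{n,n+1},\Phi^{n+1}(X^+_{0,1})]$ or the bookkeeping for $H_{0,1}$. That computation can be done, but it is long---one must track the $\delta_{n,u}$ contributions from $(W^{(1)}_{n,n+1})_{(0)}W^{(1)}_{n+1,u}$, the central terms from the $(1)$-products, and the interplay of $\alpha W^{(1)}_{n,n}$, $-2\alpha\Phi^{n+1}(H_{0,0})$, and the shift $(\ve+\tfrac{\hbar}{2}n)H_{n,0}$---and your sketch does not supply these details. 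The paper's shortcut of shrinking the generating set is what lets the proof fit in a few lines; once you have verified the $i\neq 0$ cases you are already done.
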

\begin{proof}
The affine Yangian is generated by $X^\pm_{i,0}$ for $0\leq i\leq n-1$ and $X^+_{j,1}$ for $1\leq j\leq n-1$ by the defining relations \eqref{Eq2.1}-\eqref{Eq2.10}. Thus, it is enough to show that
\begin{gather}
\Phi^{n+1}\circ\Psi(X^\pm_{i,0})=\iota\circ\Phi^n(X^\pm_{i,0}),\label{552-1}\\
\Phi^{n+1}\circ\Psi(X^+_{j,1})=\iota\circ\Phi^n(X^+_{j,1}).\label{552-2}
\end{gather}
By the definition of $\Phi^n$, $\iota$ and $\Psi$, \eqref{552-1} holds. By the definition of $\Phi^n$ and $\Psi$, we have
\begin{align*}
&\quad\Phi^{n+1}\circ\Psi(X^+_{j,1})\\
&=\Phi^{n+1}(X^+_{j,1}+\displaystyle\sum_{s \geq 0}\limits E_{j,n+1}t^{-s-1} E_{n+1,j+1}t^{s+1})\\
&=W^{(2)}_{j,j+1}t+\dfrac{i}{2}\Phi^{n+1}(X_{j,0}^{+})\\
&\quad-\displaystyle\sum_{s \geq 0}\limits\displaystyle\sum_{u=1}^j\limits W^{(1)}_{j,u}t^{-s} W^{(1)}_{u,j+1}t^s-\displaystyle\sum_{s \geq 0}\limits\displaystyle\sum_{u=j+1}^{n+1}\limits W^{(1)}_{j,u}t^{-s-1} W^{(1)}_{u,j+1}t^{s+1}\\
&\quad+\displaystyle\sum_{s \geq 0}\limits W^{(1)}_{j,n+1}t^{-s-1} W^{(1)}_{n+1,j+1}t^{s+1}\\
&=W^{(2)}_{j,j+1}t+\dfrac{i}{2}\Phi^{n+1}(X_{j,0}^{+})\\
&\quad-\displaystyle\sum_{s \geq 0}\limits\displaystyle\sum_{u=1}^j\limits W^{(1)}_{j,u}t^{-s} W^{(1)}_{u,j+1}t^s-\displaystyle\sum_{s \geq 0}\limits\displaystyle\sum_{u=j+1}^{n}\limits W^{(1)}_{j,u}t^{-s-1} W^{(1)}_{u,j+1}t^{s+1}.
\end{align*}
On the other hand, by the definition of $\Phi^n$ and $\iota$, we obtain
\begin{align*}
&\quad\iota\circ\Phi^n(X^+_{j,1})\\
&=W^{(2)}_{j,j+1}t+\dfrac{i}{2}\Phi^{n+1}(X_{j,0}^{+})\\
&\quad-\displaystyle\sum_{s \geq 0}\limits\displaystyle\sum_{u=1}^j\limits W^{(1)}_{j,u}t^{-s} W^{(1)}_{u,j+1}t^s-\displaystyle\sum_{s \geq 0}\limits\displaystyle\sum_{u=j+1}^{n}\limits W^{(1)}_{j,u}t^{-s-1} W^{(1)}_{u,j+1}t^{s+1}.
\end{align*}
Thus, the relation \eqref{552-2} holds.
\end{proof}
\section*{Acknowledgement}
The author expresses his sincere thanks to Thomas Creutzig, Nicolas Guay, Shigenori Nakatsuka, Tomoyuki Arakawa, Naoki Genra and Junichi Matsuzawa for the helpful discussion. 
\section*{Data Availability}
The authors confirm that the data supporting the findings of this study are available within the article and its supplementary materials.
\section*{Declarations}
\subsection*{Funding}
This work was supported by JSPS Overseas Research Fellowships, Grant Number JP2360303. 
\subsection*{Conflicts of interests/Competing interests}
The authors have no competing interests to declare that are relevant to the content of this article.
\bibliographystyle{plain}
\bibliography{syuu}
\end{document}